 \def\red#1{\textcolor{red}{#1}}
\def\sajat#1{}
\newcommand{\SC}[1]{\msf{C}_{#1}}
\newcommand{\SN}[1]{\msf{N}_{#1}}
\newcommand{\anchor}[2]{\textup{Anchor}_{#1}(#2)}
\newcommand{\rank}{\textup{rank}}
\DeclareMathOperator{\interior}{Inter}
\theoremstyle{plain}
\newtheorem{theorem}{Theorem}
\newtheorem{lemma}[theorem]{Lemma}
\newtheorem{corollary}[theorem]{Corollary}
\theoremstyle{definition}
\newtheorem{remark}[theorem]{Remark}
\begin{document}
\title[On the structure theorem of planar semimodular lattices]
{Notes on planar semimodular lattices. VII.\\
Resections of planar semimodular lattices}  

\author[G.\ Cz\'edli]{G\'abor Cz\'edli}
\email{czedli@math.u-szeged.hu}
\urladdr{http://www.math.u-szeged.hu/
%$\sim$czedli/}
~czedli/}

\address{University of Szeged\\Bolyai Institute\\
Szeged, Aradi v\'ertan\'uk tere 1\\HUNGARY 6720}

\author[G.\ Gr\"atzer]{George Gr\"atzer}
\email{gratzer@me.com} 
\urladdr{http://server.math.umanitoba.ca/homepages/gratzer/}
\address{Department of Mathematics\\
University of Manitoba\\
Winnipeg, MB R3T 2N2\\Canada}

\thanks{This research was supported 
by the NFSR of Hungary (OTKA), grant numbers K77432 and K83219}

\subjclass[2010]{Primary: 06C10. Secondary: 06B15.}

\keywords{semimodular; planar; cover-preserving, slim, rectangular.}

\date{June 5, 2012}

\begin{abstract} 
A recent result of G.~Cz\'edli and E.\,T.~Schmidt gives 
a construction of~slim (planar) semimodular lattices  
from planar distributive lattices by adding elements, adding ``forks''. 
We give a construction that accomplishes the same by~deleting elements,
by ``resections''.
\end{abstract}

\leftline{\red{\hfill June 23, 2012.}}
\maketitle

\section{Introduction}
In this paper, we present a construction of slim (planar) 
semimodular lattices from planar distributive lattices 
by a series of \emph{resections}.
A resection starts with a cover-preserving $\SC{3}^2$ 
(the dark gray square of the three-element chain 
in Figure~\ref{fig:resect}), 
and it deletes two elements to~get an $\SN 7$ 
(see Figure~\ref{fig:Nseven}) from~$\SC{3}^2$, 
and then deletes some more elements (all the black-filled ones),  
going up and down to the left and to the right, 
to~preserve semimodularity; 
see Figure~\ref{fig:afterresect} for the result of the resection. 

\begin{figure}[p]
\centerline{\includegraphics[scale=1.0]{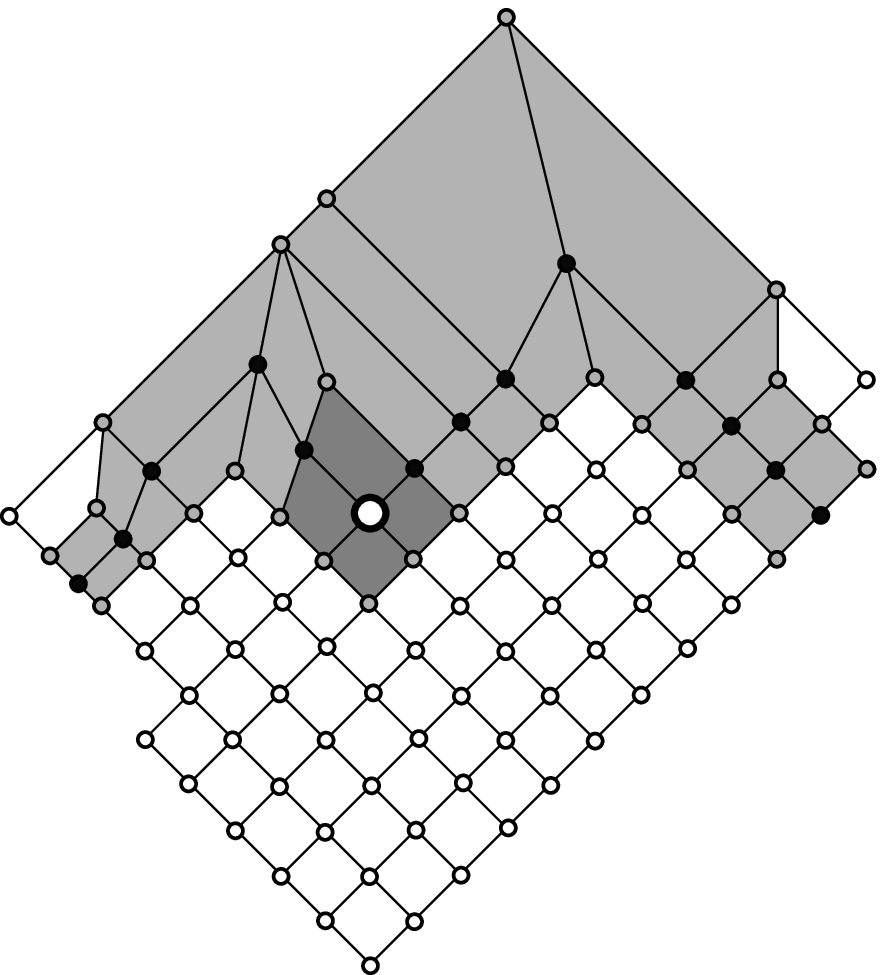}}
\caption{Resect this diagram at the element marked by the big circle
by deleting the black-filled elements}\label{fig:resect}

\bigskip

\centerline{\includegraphics[scale=1.0]{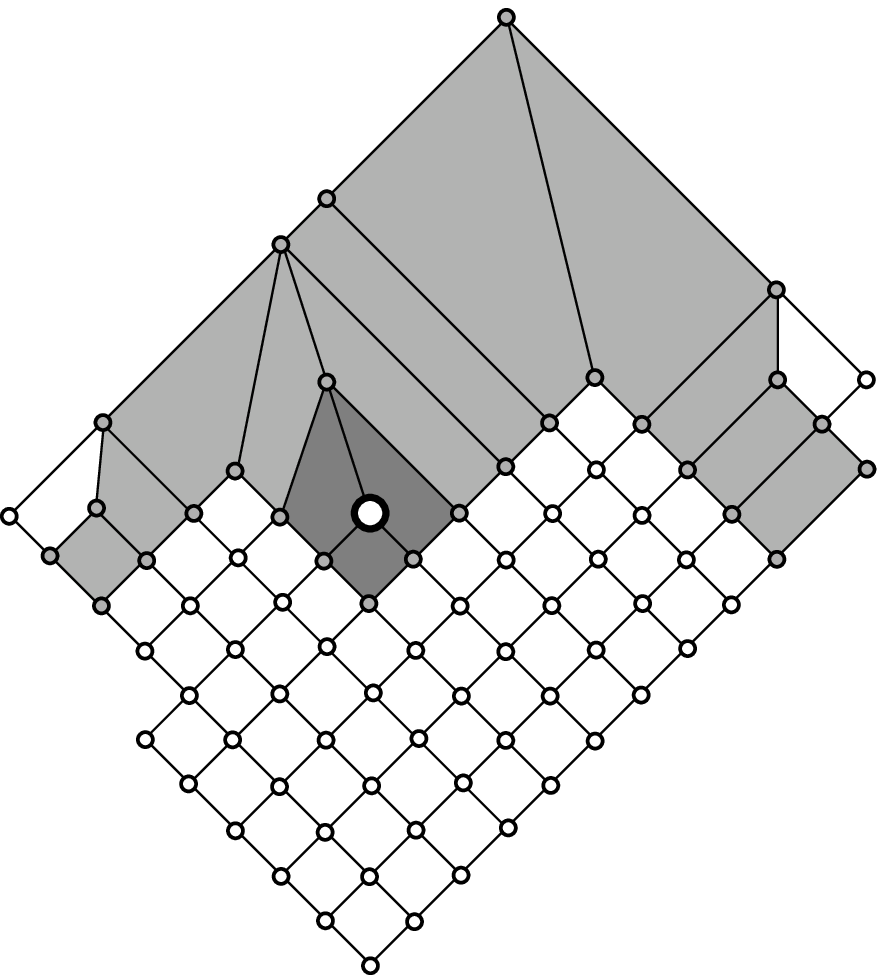}}
\caption{to obtain this diagram}\label{fig:afterresect}
\end{figure}

\begin{figure}
\centerline{\includegraphics[scale=1.0]{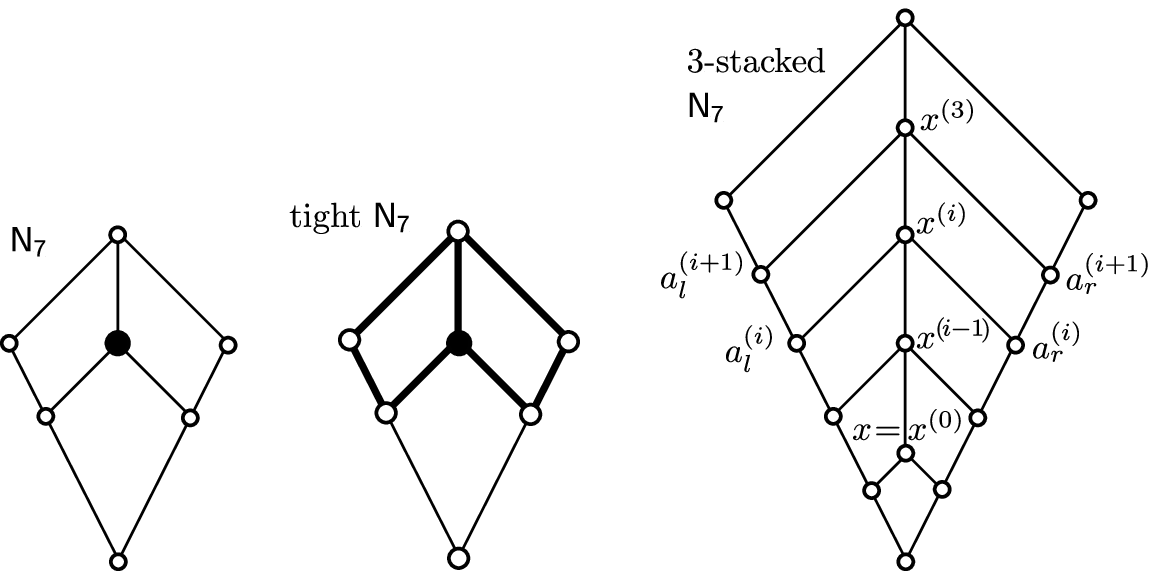}}
\caption{$\SN 7$ and its variants}\label{fig:Nseven}
\end{figure}

A lattice $L$ is \emph{slim} if it is finite 
and $\Ji L$ contains no three-element anti\-chain. 
Slim lattices are \emph{planar}, so we will consider \emph{planar diagrams} of slim semimodular lattices, \emph{slim semimodular diagrams}, for short. 

For the basic concepts and notation, 
we refer the reader to G.~Gr\"atzer \cite{LTF} 
and G. Cz\'edli and G.~Gr\"atzer \cite{CGa}.

\subsection*{Outline} Section~\ref{SectConstr} introduces resections. 
Section~\ref{SecResults} states the main result.  
Section~\ref{SecSchemes} recalls some known results 
on slim semimodular lattices 
and proves some facts on (the inverse of) resection schemes. 
Section~\ref{SecMainProof} contains the proof of the main result. 

\section{The construction}\label{SectConstr}
Let $D$ be a slim semimodular diagram. 
Two prime intervals of $D$ are \emph{consecutive}
if they are opposite sides of a $4$-cell (see Section~\ref{SecSchemes}). 
As in G.~Cz\'edli and E.\,T.\ Schmidt~\cite{CS11},
maximal sequences of  
consecutive prime intervals form a \emph{$\SC2$-trajectory}. 
So a $\SC 2$-trajectory is an equivalence class of the transitive reflexive closure of the ``consecutive'' relation. 
 
Similarly, let $A$ and $B$ be two cover-preserving $\SC 3$-chains of $D$. 
If they are opposite sides of a cover-preserving 
$\SC 3\times \SC 2$, then $A$ and $B$ are called \emph{consecutive}. 
An equivalence class of the transitive reflexive closure 
of this ``consecutive'' relation 
is called a \emph{$\SC 3$-trajectory}. 

We recall the basic properties of $\SC 2$-trajectories from \cite{CS11} and \cite{CSb}; 
they also hold for  $\SC 3$-trajectories.
For $i\in\set{2,3}$, a $\SC  i$-trajectory goes from left to right 
(unless otherwise stated); 
they do not branch out. 
A $\SC  i$-trajectory is of two types:
an \emph{up-trajectory,} which goes up (possibly, in zero steps) 
and a \emph{hat-trajectory,} which goes up (possibly in zero steps), 
then turns to the lower right, 
and finally it goes down (possibly, in zero steps). 

Note that the left and right ends of a $\SC 2$-trajectory
are on the boundary of~$L$; this may fail for a $\SC 3$-trajectory.

The \emph{elements} of a $\SC  i$-trajectory 
are the elements of the $\SC  i$-chains forming~it.
Let $A$ be a cover-preserving $\SC  i$-chain in $D$. 
By planarity, there is a unique $\SC  i$-tra\-jectory through $A$. 
The $\SC  i$-chains of this trajectory to the left of $A$ and including $A$
form the \emph{left wing of $A$}. 
The \emph{right wing of $A$} is defined analogously. 
  
Next, let $B$ be a cover-preserving 
$\SC 3^2=\SC 3 \times \SC 3$ of the diagram $D$. 
Let~$W_l$ be the left wing of the upper left boundary of $B$ and 
let $W_r$ be the right wing of the upper right boundary of $B$. 
Assume that $W_l$ and $W_r$ terminate on the boundary of $D$ 
(that is, the last $\SC 3$-chains are on the boundary of $D$). 
In this case, the collection of elements of $S = B  \uu  W_l  \uu  W_r$
is called a \emph{$\SC 3$-scheme} of $D$, see Figure~\ref{fig:resect}
for an example. 
The elements of $W_l$ and $W_r$ form the \emph{left wing} 
and the \emph{right wing} of this $\SC 3$-scheme, respectively, 
while $B$ is the \emph{base}. 
The middle element of $S$ is the \emph{anchor} of the scheme. 
A $\SC 3$-scheme is uniquely determined by its anchor. 
Of course, $D$ may have cover-preserving $\SC 3^2$'s  
that cannot be extended to $\SC 3$-schemes. 
For example, the slim semimodular diagrams 
in Figure~\ref{fig:examples} have cover-preserving 
$\SC 3^2$ sublattices but no $\SC 3$-schemes.

\begin{figure}
\centerline{\includegraphics[scale=1.0]{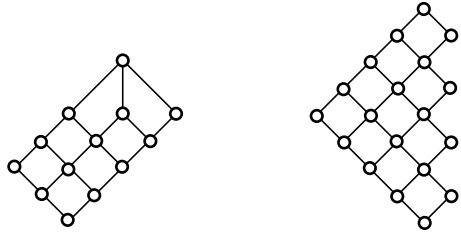}}
\caption{Some slim semimodular diagrams}\label{fig:examples}
\end{figure}

The concept of a $\SC 2$-scheme 
and the related terminology are analogous, 
see Figures~\ref{fig:afterresect} 
and \ref{fig:system} for two examples. 
The base of a $\SC 2$-scheme is a cover-preserving $\SN 7$, 
and its wings are in $\SC 2$-trajectories. 
The middle element of the  base is again called the anchor, 
and it determines the $\SC 2$-scheme. 
Since $\SC 2$-trajectories always reach the boundary of $D$, 
each cover-preserving $\SN 7$ sublattice 
is the base of a unique $\SC 2$-scheme.

For $i\in\set{2,3}$ and a $\SC  i$-scheme $S$, 
we define the \emph{upper boundary}, the \emph{lower boundary}, 
and the \emph{interior} of $S$ as expected.

Let $S$ be a $\SC 3$-scheme of a slim semimodular diagram $D$. 
By removing all the interior elements of $S$ but its anchor, 
we obtain a new slim semimodular diagram,~$D'$, 
and $S$ turns into a $\SC 2$-scheme of $D'$. 
We say that $D'$ is obtained from $D$ by a \emph{resection}; 
this is illustrated in Figures~\ref{fig:resect} 
and \ref{fig:afterresect}. 
The reverse procedure, transforming a $\SC 2$-scheme  
to a $\SC 3$-scheme by adding new interior elements, 
is called an \emph{insertion}.  

\section{The results}\label{SecResults}

Following D.~Kelly and I.~Rival~\cite{KR75}, 
we call two planar diagrams \emph{similar} 
if~there is a bijection $\gf$ between them such that 
$\gf$ preserves the left-right order of the upper covers 
and of the lower covers of an element. 
We~are interested in diagrams only up to similarity. 

A \emph{grid} is a planar diagram of the form 
$\SC m\times\SC n$ for $m,n\geq 2$.
We obtain a slim distributive diagram from a grid by a sequence of steps; 
each step omits a doubly irreducible element from a boundary chain. 
Our main result generalizes this to  
slim semimodular lattice diagrams.

\begin{theorem}\label{thmmain}
Slim semimodular lattice diagrams are characterized as diagrams 
obtained from slim distributive lattice diagrams by a sequence of resections.
\end{theorem}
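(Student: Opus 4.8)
The plan is to establish the two inclusions separately. For the easy direction, I would show that every diagram obtained from a slim distributive diagram by a sequence of resections is a slim semimodular diagram. Since slim distributive diagrams are slim semimodular, it suffices to check that a single resection sends a slim semimodular diagram $D$ to a slim semimodular diagram $D'$. This is essentially the content of the construction section: removing the interior elements of a $\SC 3$-scheme (keeping the anchor) replaces the base $\SC 3^2$ and its two wings by an $\SN 7$ with $\SC 2$-wings, and one verifies directly that $D'$ is a lattice, that it remains planar, that covers only get shortened so no three-element antichain appears in $\Ji{D'}$ (slimness is preserved), and that semimodularity survives — the wings were trimmed along $\SC i$-trajectories precisely so that the forbidden configurations of G.~Gr\"atzer--E.~Knapp (a cover-preserving $\SM 3$ or the relevant $\SN{}$-shaped obstruction) do not arise. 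I expect this direction to be routine given the machinery already set up, mostly a careful local inspection near the scheme together with the trajectory bookkeeping recalled from \cite{CS11,CSb}.

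For the hard direction I would argue that every slim semimodular diagram $D$ that is \emph{not} distributive admits at least one resection, and that resections strictly decrease a complexity measure (say, the number of elements, or the number of ``forks'' in the sense of the Cz\'edli--Schmidt representation), so that iterating terminates at a slim distributive diagram. The crucial step is therefore: \emph{if $D$ is slim semimodular but not distributive, then $D$ contains a $\SC 3$-scheme} — equivalently, some cover-preserving $\SC 3^2$ whose designated left and right wings both run all the way to the boundary of $D$. To find one, I would use that non-distributivity of a slim semimodular $D$ forces a cover-preserving $\SC 3^2$ somewhere (this is standard for slim semimodular lattices: distributive is the same as having no cover-preserving $\SC 3^2$, since by Cz\'edli--Schmidt slim semimodular lattices are obtained from grids by fork insertions, and a fork creates exactly such a square). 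Then, starting from any cover-preserving $\SC 3^2$, I would ``push'' it: follow the $\SC 3$-trajectory through its upper-left boundary chain to the left and the $\SC 3$-trajectory through its upper-right boundary chain to the right. The content of Section~\ref{SecSchemes} (the promised facts about the inverse of resection schemes) should guarantee that among all cover-preserving $\SC 3^2$'s there is one — intuitively an ``outermost'' or ``topmost'' one, e.g.\ with anchor of maximal height, or the one coming from the last fork inserted — whose wings, being up- or hat-trajectories, are forced to terminate on the boundary rather than being blocked by further structure; the examples in Figure~\ref{fig:examples} show that not every $\SC 3^2$ works, so this selection is the real point.

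The main obstacle, then, is exactly this existence argument: proving that a non-distributive slim semimodular diagram always contains a \emph{full} $\SC 3$-scheme (wings reaching the boundary), not merely a cover-preserving $\SC 3^2$. I would attack it by induction on the number of fork insertions in a Cz\'edli--Schmidt presentation of $D$: the last inserted fork $F$ sits inside a cover-preserving $\SC 3^2$ of $D$, and because no fork was inserted after $F$, nothing obstructs the two wings emanating from it, so they extend as $\SC i$-trajectories to the boundary and yield a $\SC 3$-scheme; performing the resection at its anchor undoes that fork (this is where one must check the resection is literally the inverse of the fork insertion, up to similarity), reducing the fork count by one, and the induction hypothesis finishes. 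Two auxiliary lemmas are needed and I would prove them in Section~\ref{SecSchemes}: (i) resection and insertion are mutually inverse operations on schemes, and (ii) the anchor/scheme is uniquely recoverable, so the process is well defined up to similarity. Once these are in place, the termination is immediate from the strictly decreasing fork count, and combining both directions gives the stated characterization.
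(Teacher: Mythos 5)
Your easy direction is fine and is essentially the paper's Lemma~\ref{insreslmM}. The hard direction, however, rests on a reversal of the two operations, and this is a genuine error rather than a cosmetic one. A resection deletes the interior of a $\SC 3$-scheme and thereby \emph{creates} a cover-preserving $\SN 7$; an insertion fills a cover-preserving $\SN 7$ back up to a $\SC 3^2$ and thereby \emph{removes} it. Since, by Lemma~\ref{lmAdisnonhet}, a slim semimodular diagram is distributive if and only if it has no cover-preserving $\SN 7$ (not: no cover-preserving $\SC 3^2$ --- grids are distributive and are full of those), iterating resections on $D$ moves you away from distributivity, never toward it. What the theorem requires is the opposite process: starting from the given slim semimodular $D$, perform \emph{insertions} until no cover-preserving $\SN 7$ remains; the resulting distributive diagram then resects back down to $D$. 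Consequently your ``crucial step'' --- that a non-distributive slim semimodular $D$ contains a $\SC 3$-scheme --- is both unnecessary and false in general ($\SN 7$ itself is slim, semimodular, non-distributive, and contains no cover-preserving $\SC 3^2$ at all); the existence statement actually needed is that every cover-preserving $\SN 7$ is the base of a $\SC 2$-scheme, which is automatic because $\SC 2$-trajectories always terminate on the boundary. Relatedly, ``performing the resection at its anchor undoes that fork'' is backwards: resection is the analogue of \emph{adding} a fork (both produce a new $\SN 7$); insertion is what undoes one.

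Even after correcting the direction, your proposal misses the one nontrivial point of the proof: termination. Neither the number of elements (insertions increase it) nor the number of forks/covering $\SN 7$'s works as a decreasing measure --- Remark~\ref{R:notwork} exhibits an infinite sequence of insertions in which the number of covering $\SN 7$'s never drops. The paper's resolution is to define the rank of an anchor via stacked $\SN 7$ regions (Lemma~\ref{sTcklmMa}) and to prove Lemma~\ref{mainlemma}: an insertion at $u\in\anchor 2D$ gives $\anchor 2{D'}\subseteq \anchor 2D-\set{u}$ when $\rank_{D}(u)=0$, and $\anchor 2{D'}\subseteq (\anchor 2D-\set{u})\uu\set{u^*}$ with $\rank_{D'}(u^*)=\rank_{D}(u)-1$ otherwise; always inserting at an anchor of minimal rank then forces the process to stop. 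This selection argument is the real content of the proof and is absent from your outline.
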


The proof of this theorem now appears clear. 
Let $D$ be a slim semimodular lattice diagram. 
Find in it a covering $N_7$ as in Figure~\ref{fig:afterresect}. 
Perform an insertion to obtain the diagram of Figure~\ref{fig:resect}.
The diagram of Figure~\ref{fig:resect} has one fewer covering $N_7$.
Proceed this way until we obtain a diagram without covering $N_7$-s, 
that is, a covering $N_7$ diagram.

\begin{remark}\label{R:notwork}
The argument of the last paragraph does not necessarily work. 
Start with the first diagram in Figure~\ref{fig:endless}.
Apply an insertion at the black-filled element, 
to obtain the second diagram. 
Apply an insertion at the gray-filled element of the second diagram, 
to obtain the third diagram. And so on. 
It is clear that the number of covering $N_7$-s
is not diminishing.
\end{remark}

\begin{figure}
\centerline{\includegraphics[scale=1.0]{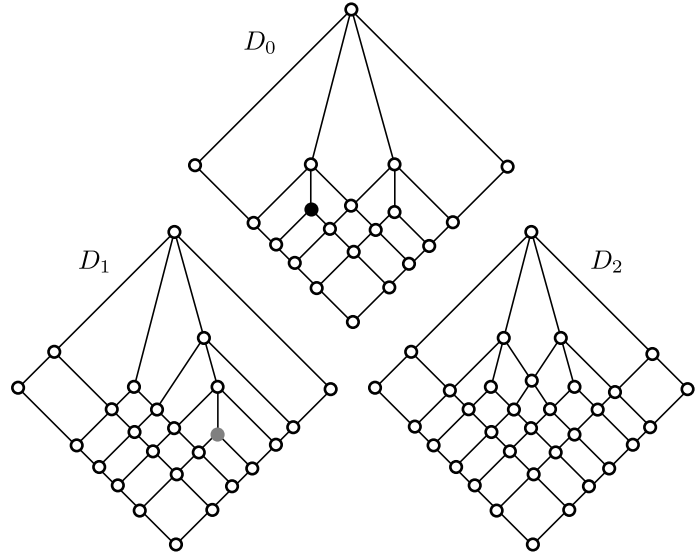}}
\caption{The process does not stop}\label{fig:endless}
\end{figure}

We define a \emph{weak corner} of a planar semimodular diagram $D$
as an element~$x$ on the boundary of $D$ with the properties:
\begin{enumeratei}
\item $x$ is doubly irreducible;
\item $x$ is not comparable to some $y \in D$.
\end{enumeratei} 
If $x$ is a weak corner such that its lower cover, $x_*$, 
has exactly two covers 
and its upper cover, $x^*$, has exactly two lower covers, 
then we call $x$ a \emph{corner}.
As~defined in G.~Gr\"atzer and E.~Knapp~\cite{GK09}, 
a planar diagram (and the corresponding lattice) is \emph{rectangular}, 
if it has exactly one left weak corner 
and exactly one right weak corner, 
and these two elements are complementary. 
Slim semimodular diagrams can be obtained from slim rectangular diagrams 
by~removing corners, one-by-one. 
Moreover, only slim semimodular diagrams can be obtained this way.  
So we get:

\begin{corollary} \label{rectcorRo}
Slim rectangular diagrams are characterized 
as diagrams obtained from grids by a sequence of resections.
\end{corollary}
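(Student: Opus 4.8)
The plan is to obtain Corollary~\ref{rectcorRo} from Theorem~\ref{thmmain} by showing that the property ``rectangular'' is tracked correctly through resections, and that a slim \emph{distributive} diagram which is rectangular must already be a grid. Everything rests on two observations about a resection $D\mapsto D'$ (equivalently, about its inverse, an insertion): first, it deletes (resp., adds) only elements that are interior to a $\SC 3$-scheme, and a $\SC 3$-scheme lies between its own upper and lower boundaries, so the boundary of the whole diagram --- and hence the set of its weak corners --- is left unchanged; second, since $D'\subseteq D$ as sets with the same least and greatest elements, a pair of complementary elements of $D$ that lies in $D'$ stays complementary in $D'$ (a common bound computed in $D'$ is also a common bound in $D$). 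Granting these, $D$ is rectangular if and only if $D'$ is, and likewise for insertions.

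The ``if'' direction is then immediate. Suppose $D$ is obtained from a grid $G$ by a sequence of resections. Since $G$ is a slim distributive diagram, $D$ is a slim semimodular diagram by Theorem~\ref{thmmain}; and $G$ is rectangular --- in $\SC m\times\SC n$ the only doubly irreducible elements other than the bounds are $(m-1,0)$ and $(0,n-1)$, which form a complementary pair with one member on each side, cf.~\cite{GK09} --- so rectangularity survives each resection and $D$ is a slim rectangular diagram.

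For the ``only if'' direction, let $D$ be a slim rectangular diagram. By Theorem~\ref{thmmain}, $D$ is obtained from some slim distributive diagram $E$ by a sequence of resections; reading this backwards, $E$ is obtained from $D$ by a sequence of insertions, hence $E$ is rectangular by the observations above. It remains to check that a slim distributive rectangular diagram $E$ is similar to a grid. Let $u_l,u_r$ be its left and right weak corners; they are complementary, so in the distributive lattice $E$ the assignment $x\mapsto(x\wedge u_l,\,x\wedge u_r)$ is an isomorphism onto $[0,u_l]\times[0,u_r]$. Consequently the set of join-irreducibles of $E$ is the disjoint union of the (mutually incomparable) sets of join-irreducibles of $[0,u_l]$ and of $[0,u_r]$; since $u_l$ and $u_r$ are proper, neither of these is empty, so slimness (no three-element antichain among join-irreducibles) forces each of them to be a chain. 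Hence $[0,u_l]$ and $[0,u_r]$ are chains, $E\cong\SC m\times\SC n$ for suitable $m,n\geq 2$, and by the essential uniqueness of planar diagrams of slim lattices (cf.~\cite{CGa,LTF}) $E$ is similar to the grid $\SC m\times\SC n$. Thus $D$ is obtained from a grid by a sequence of resections, as needed.

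The step I expect to be the real obstacle is the first of the two observations, and within it the claim that a resection creates no new weak corner and destroys none. That the boundary is unchanged as a set is clear once one knows that the deleted elements are interior to a scheme; the delicate part is that \emph{double irreducibility} of a boundary element is preserved as well --- i.e., that the unique lower and upper covers of a weak corner are never among the deleted interior elements, and that deleting interior elements introduces no new covering between two boundary elements. This is exactly the point at which the combinatorics of $\SC 3$-schemes, and the facts about them collected in Section~\ref{SecSchemes}, have to be brought to bear. The remaining ingredients are routine: the direct-product decomposition is standard distributive-lattice theory, and the ``if'' direction needs nothing beyond Theorem~\ref{thmmain} once preservation of rectangularity is established.
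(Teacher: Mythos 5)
Your strategy coincides with the paper's: both reduce the corollary to Theorem~\ref{thmmain} via the observation that an insertion (equivalently, a resection) leaves the weak corners, $0$, and $1$ unchanged and hence preserves rectangularity, and both then need that a slim distributive rectangular diagram is a grid --- which you prove directly via the decomposition $x\mapsto(x\wedge u_l,\,x\wedge u_r)$, where the paper instead invokes G.~Gr\"atzer and E.~Knapp~\cite{GK09}. That last part of your argument is correct and self-contained.

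The step that fails is your justification of the preservation claim, namely ``the deleted elements are interior to a $\SC 3$-scheme, so the boundary of the whole diagram is left unchanged.'' Interior to the scheme does not imply interior to $D$: by the definition of a $\SC 3$-scheme the wings terminate on the boundary of $D$, so the middle element of the last $\SC 3$-chain of each wing lies on the boundary of $D$, belongs to the interior of the scheme, and is removed by the resection (in Figure~\ref{fig:resect} the black-filled elements reach both boundary chains). Thus the boundary of $D'$ is a proper subset of the boundary of $D$, and the claim has to be weakened to: the set of \emph{weak corners} is unchanged. That weaker statement does hold, but for a different reason: every deleted boundary element is the middle of a covering $\SC 3$-chain that is a side of a covering $\SC 3\times\SC 2$ (or of the base $\SC 3^2$), hence has two upper covers or two lower covers and is never doubly irreducible; and for each surviving boundary element the numbers of upper and lower covers are unaffected, since a deleted cover is replaced by exactly one new one along the collapsed chain. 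A second, smaller issue: your parenthetical argument for preservation of complementarity (``a common bound computed in $D'$ is also a common bound in $D$'') only applies when $D'\subseteq D$, that is, to resections, whereas your ``only if'' direction needs preservation under \emph{insertions}, where one must rule out a newly inserted element lying below both weak corners or above both. Neither repair is difficult, but as written these two steps do not go through.
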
 

\section{Schemes}\label{SecSchemes}

Let $D$ be a slim semimodular diagram. 
By  G.~Gr\"atzer and E.~Knapp~\cite{GK07} and 
G.~Cz\'edli and E.\,T.~Schmidt~\cite[Lemma 2]{CSa}, 
an element of $D$ has at most two covers.
We also know from \cite[Lemma 6]{CSa} that
a join-irreducible element is on the boundary of $D$.
 
Let $a<b$ in a planar diagram $D$, 
and assume that $C_1$ and $C_2$ are maximal chains 
in the interval $[a,b]$ such that $C_1-\set{a,b}$ 
is strictly on the left of $C_2$,  $C_2-\set{a,b}$ 
is strictly on the right of $C_1$, and $C_1 \ii  C_2=\set{a,b}$. 
Then, following D.~Kelly and I.~Rival~\cite{KR75}, 
the intersection of the right of $C_1$ and the left of $C_2$ 
is called a \emph{region} of $D$. 
A region of $D$ is a planar subdiagram of $D$.  
Minimal regions are called \emph{cells}, 
and cells with four vertices (and four edges) are \emph{$4$-cells}. 
For a slim semimodular diagram $D$, 
\begin{equation}\label{covsqrefourcells}
\text{the $4$-cells and the covering squares of $D$ are the same.}
\end{equation}
Our proof relies heavily on the following two lemmas,
see G.~Gr\"atzer and E.~Knapp~\cite[Lemma 7]{GK07}, for the first and
G.~Gr\"atzer and E.~Knapp~\cite[Lemma 6]{GK07} and 
G.~Cz\'edli and E.\,T.~Schmidt~\cite[Lemma 15]{CSa}, for the second.

\begin{lemma}\label{lMawhadIssU}
Let $D$ be a planar lattice diagram. 
Then $D$ is slim and semimodular 
if{f} its cells are $4$-cells and no two distinct 
$4$-cells have the same bottom.
\end{lemma}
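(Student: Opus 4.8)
The plan is to prove the two implications of the ``iff'' separately. For necessity I would lean on the two facts recalled just above --- in a slim semimodular diagram every element has at most two covers, and, by~\eqref{covsqrefourcells}, the covering squares are exactly the $4$-cells --- whereas for sufficiency I would have to recover both semimodularity and slimness from the cell conditions alone. In both directions I would use the basic planar facts that a cell is a minimal region in the sense of Kelly and Rival~\cite{KR75}, so that the two lower boundary edges of a cell with bottom $u$ run from $u$ to \emph{two distinct} covers of $u$ (one cover would make the two boundary chains start along the same edge and enclose no region), and that a consecutive pair of edges going up from a vertex lies in a unique cell.

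\emph{Necessity.} Suppose $D$ is slim and semimodular and let $C$ be a cell with bottom $u$. By the above and the ``at most two covers'' fact, $u$ has exactly two covers, $p$ on the left and $q$ on the right, and $u\prec p$, $u\prec q$ are the lower edges of $C$. Semimodularity gives $p\vee q\succ p$ and $p\vee q\succ q$, so $\{u,p,q,p\vee q\}$ is a covering square, hence a $4$-cell by~\eqref{covsqrefourcells}; being the unique cell with lower edges $u\prec p$ and $u\prec q$, this $4$-cell equals $C$. So every cell is a $4$-cell. If $C_1\neq C_2$ were $4$-cells with a common bottom $u$, then, since $u$ has at most two covers, both would have the same left and the same right lower edge, hence would be the same cell, a contradiction.

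\emph{Sufficiency.} Suppose every cell of $D$ is a $4$-cell and distinct $4$-cells have distinct bottoms. The first thing I would note is that the $4$-cells with a fixed bottom $u$ are exactly the cells spanned by consecutive pairs of covers of $u$, so the ``distinct bottoms'' hypothesis amounts to: every element of $D$ has at most two covers. Semimodularity then follows: given $u\prec p$ and $u\prec q$ with $p\neq q$, these are the two covers of $u$, the cell they bound is a $4$-cell $\{u,p,q,t\}$, and from $p\prec t$ and $p<p\vee q\le t$ (the first inequality strict because $q\not\le p$) I get $t=p\vee q$, which therefore covers both $p$ and $q$. Finally, for slimness I would argue that a cover-preserving $M_3$ in $D$ would have a bottom with three covers, which is impossible; hence $D$ has no cover-preserving $M_3$, and then $D$ is slim by the known description of slim planar semimodular lattices (G.~Gr\"atzer and E.~Knapp~\cite{GK07}; G.~Cz\'edli and E.\,T.~Schmidt~\cite{CSa}).

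The steps that read as routine above are in fact where the work is: that a cell's bottom has its two boundary edges pointing at two distinct covers, that a consecutive pair of up-edges at a vertex sits in a unique cell, and the resulting uniqueness used in the ``two $4$-cells, one bottom'' argument, all have to be settled inside the Kelly--Rival region calculus, and the degenerate cases (a prime interval that is not a cell, a cell whose left and right sides partly coincide) need separate attention; I expect this bookkeeping to be the main obstacle. A secondary point is that the slimness step of the sufficiency direction is not self-contained as written --- it borrows the external $M_3$-free characterization of slim planar semimodular lattices --- and a fully elementary treatment would instead have to exclude a three-element antichain in $\Ji D$ directly from the cell conditions.
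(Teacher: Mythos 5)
The paper does not prove Lemma~\ref{lMawhadIssU} at all: it is quoted from G.~Gr\"atzer and E.~Knapp~\cite[Lemma 7]{GK07}, as the sentence introducing it states, so there is no in-paper argument to compare yours with. Judged on its own, your sketch follows the natural route and is essentially sound: necessity from ``at most two covers'' plus semimodularity, sufficiency by reading the cover structure off the cells, with the real work correctly located in the Kelly--Rival region bookkeeping that you defer. Three cautions. (1) In the necessity direction you identify the covering square $\{u,p,q,p\vee q\}$ with a cell by appealing to \eqref{covsqrefourcells}; but in the standard development the nontrivial half of \eqref{covsqrefourcells} (every covering square is a $4$-cell) is itself derived from the necessity half of Lemma~\ref{lMawhadIssU}, so this is circular unless you instead argue directly that the top of the cell $C$ equals $p\vee q$. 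This is the one point that needs actual repair in a from-scratch proof. (2) Your semimodularity argument in the sufficiency direction verifies only Birkhoff's condition (both elements cover their meet implies both are covered by their join); you should say explicitly that for lattices of finite length this is equivalent to semimodularity --- true, by an easy induction on the length of $[a\wedge b,b]$, but a step, not a tautology. (3) As you concede yourself, the slimness step imports the characterization of slim planar semimodular lattices by the absence of a cover-preserving $\SM 3$ from \cite{CSa}; that is legitimate but makes your proof a reduction to another nontrivial result rather than a self-contained argument. None of these invalidates the outline.
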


\begin{lemma}\label{lmAdisnonhet}
A slim semimodular diagram is distributive if{f} 
it has no cover-preserving $\SN 7$.
\end{lemma}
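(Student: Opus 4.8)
The plan is to prove Lemma~\ref{lmAdisnonhet} in both directions, treating $\SN 7$ as the canonical obstruction to distributivity among slim semimodular diagrams. The forward direction is essentially trivial: $\SN 7$ is itself a nondistributive lattice (it contains a pentagon $\SN 5$, or one checks directly that it is neither modular nor distributive), and the property of having a cover-preserving sublattice isomorphic to $\SN 7$ is inherited by any lattice containing such a configuration; so a distributive diagram cannot contain a cover-preserving $\SN 7$. First I would dispose of this implication in one or two sentences.

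The substance is the converse: if a slim semimodular diagram $D$ has no cover-preserving $\SN 7$, then $D$ is distributive. The approach I would take is to argue that the absence of a cover-preserving $\SN 7$ forces a strong local condition on the $4$-cells, and then invoke a known characterization of finite distributive lattices among semimodular ones. Concretely, using Lemma~\ref{lMawhadIssU}, $D$ is already known to have all cells equal to $4$-cells with distinct bottoms; the extra hypothesis should be used to show that no two distinct $4$-cells share the same \emph{top} either, equivalently that every element has at most two lower covers and the ``doubled-covering'' configuration that produces an $\SN 7$ does not occur. The key step is the following claim: if two distinct covering squares of $D$ have the same top $t$, then one can walk down along $\SC 2$-trajectories from the two sides and, by semimodularity (every element having at most two covers, by the cited results from \cite{CSa,GK07}), locate a cover-preserving $\SN 7$ with top $t$. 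Once $D$ has the property that $4$-cells are determined by their top as well as by their bottom, $D$ is a planar diagram in which the covering squares behave exactly as in a grid, and the standard structure theory (a slim semimodular diagram whose $4$-cells are in bijection with both their bottoms and their tops is a cover-preserving join of its maximal chains and has no $\SN 5$ or $\SM 3$) yields distributivity; alternatively, one shows directly that $D$ has no cover-preserving diamond $\SM 3$ and no cover-preserving pentagon $\SN 5$, and in a planar semimodular lattice the absence of a cover-preserving $\SM 3$ together with semimodularity already gives modularity, whence distributivity.

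The main obstacle I expect is the combinatorial core of the converse, namely producing the cover-preserving $\SN 7$ from a failure of distributivity purely by manipulating $4$-cells and trajectories, while keeping the sublattice \emph{cover-preserving} rather than merely a sublattice. The delicate point is that a nondistributive slim semimodular lattice certainly contains a (non-cover-preserving) $\SN 5$, but upgrading this to a cover-preserving $\SN 7$ requires using planarity and the ``no two $4$-cells with the same bottom'' condition to refine the $\SN 5$ step by step; one typically takes a minimal counterexample interval $[a,b]$ witnessing nonmodularity (or nondistributivity), examines the two maximal chains through it, and pushes the configuration down to a height-$2$ interval where the geometry of $4$-cells forces the $\SN 7$ pattern. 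Since this is exactly the content of \cite[Lemma 6]{GK07} and \cite[Lemma 15]{CSa}, which the excerpt already cites as the source for this lemma, I would structure the write-up so that the forward direction is self-contained and the converse is reduced, with a clearly stated intermediate claim about $4$-cells sharing a top, to the machinery of those two references.
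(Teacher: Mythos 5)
Your overall plan coincides with what the paper actually does: Lemma~\ref{lmAdisnonhet} is not proved in the paper but imported from \cite[Lemma 6]{GK07} and \cite[Lemma 15]{CSa}, and your treatment---a short self-contained argument for the forward direction plus a reduction of the converse to exactly those references---is the same division of labour. The forward direction is correct: $\SN 7$ contains a pentagon (in the notation of Lemma~\ref{tightnhetlma}, the set $\set{a_l\wedge a_r, a_l, b_l, b_r, u^*}$ is an $\SN 5$), and since sublattices of distributive lattices are distributive, a distributive diagram has no $\SN 7$ sublattice at all, cover-preserving or otherwise. Your sketch of the converse (two $4$-cells sharing a top yield first a tight and then, after a descent, a cover-preserving $\SN 7$) has the right shape, and you correctly identify the cover-preservation issue as the crux that the cited lemmas resolve.

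However, your proposed ``alternative'' route rests on a false claim: it is not true that a planar semimodular lattice with no cover-preserving $\SM 3$ is modular. The lattice $\SN 7$ itself refutes this: it is planar, semimodular, and slim, hence contains no $\SM 3$ whatsoever, yet it contains the pentagon exhibited above and so is not even modular. The correct statement in this circle of ideas---that for a planar (slim) semimodular lattice, modularity, respectively distributivity, is equivalent to the absence of a cover-preserving $\SN 7$---is precisely the lemma you are proving, so that alternative would be circular even if the false step were repaired. Drop it and keep only the primary reduction to \cite{GK07} and \cite{CSa}.
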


Let $\anchor iD$ denote the set of anchors of $\SC i$-schemes of $D$ 
for $i\in\set{2,3}$.
The set of \emph{interior elements} of $D$, 
that is, the set of those elements that are not on the boundary of $D$,  
is denoted by $\interior(D)$.   
Clearly,
\begin{equation}\label{sLczREQCv} 
\anchor 2D\subseteq \interior(D) \ii \Mi D\text.
\end{equation}

As in G.~Gr\"atzer and E.~Knapp~\cite{GK08}, 
an $\SN 7$ sublattice of $D$ is a \emph{tight $\SN 7$} 
if the thick edges in the middle diagram of 
Figure~\ref{fig:Nseven} represent coverings. 
A tight $\SN 7$ sublattice is always determined 
by its \emph{inner dual atom}, 
we call it the \emph{centre of $\SN 7$}, 
see the black-filled element lattice in the middle of Figure~\ref{fig:Nseven}.  

\begin{lemma}\label{tightnhetlma}
Let $D$ be a slim semimodular diagram
and let $u \in \interior(D)  \ii  \Mi D$. 
Then there exists a unique  tight $\SN 7$ sublattice of $D$ 
with $u$ as the anchor. 
Moreover, if  $[a_l,b_l]$, $[u,u^*]$, 
and $[a_r,b_r]$ are consecutive prime intervals of this sublattice, 
then this sublattice is $\set{u,u^*, a_l, b_l, a_r,b_r, a_l\wedge a_r}$. 
Conversely, the center of a tight $\SN 7$ sublattice always 
belongs to $\interior(D) \ii \Mi D$.
\end{lemma}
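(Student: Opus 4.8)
The plan is to prove Lemma~\ref{tightnhetlma} in three movements: first establish \emph{existence} of a tight $\SN 7$ with anchor $u$, then pin down its \emph{exact shape} (hence \emph{uniqueness}), and finally verify the \emph{converse}. For existence, start from $u\in\interior(D)\cap\Mi D$. Since $u$ is join-irreducible, it has a unique lower cover; write $u_*$ for it. Since $u$ is meet-reducible, by the dichotomy of covers (each element of $D$ has at most two covers, cited from \cite{GK07,CSa}) $u$ has exactly two covers; call them $p$ and $q$ with $p$ to the left of $q$. Now $u\prec p$ and $u\prec q$, so $u=p\wedge q$, and $u^*:=p\vee q$ is the top of the $4$-cell $\{u,p,q,u^*\}$ guaranteed by Lemma~\ref{lMawhadIssU} (via \eqref{covsqrefourcells}). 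The point is that $u$ being \emph{interior} forces $u_*$ to also be covered on both sides in a way that stretches below: because $u$ is not on the boundary, the prime interval $[u,p]$ lies in a $\SC 2$-trajectory that continues to the left past $u$, and likewise $[u,q]$ continues to the right; tracing these one step down produces elements $a_l\prec b_l$ on the left and $a_r\prec b_r$ on the right with $b_l,b_r$ covering into the picture appropriately, and $a_l\wedge a_r$ at the bottom. Assembling $\{u,u^*,a_l,b_l,a_r,b_r,a_l\wedge a_r\}$ and checking the seven covering relations gives a tight $\SN 7$ (the thick edges are coverings by construction, so it is tight, not merely an $\SN 7$ sublattice).

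For the exact description and uniqueness: I would argue that the listed seven elements are \emph{forced}. Given \emph{any} tight $\SN 7$ with anchor $u$ and consecutive prime intervals $[a_l,b_l],[u,u^*],[a_r,b_r]$, tightness says the thick middle edges are coverings, so $a_l\prec b_l$, $u\prec u^*$, $a_r\prec b_r$ are coverings of $D$; the $\SN 7$ structure forces $b_l\vee u=u^*=b_r\vee u$ (so $u^*$ is determined as the join of the two covers of $u$, matching the construction above) and $a_l\wedge u=a_l$, i.e.\ $a_l<u$, similarly $a_r<u$, with $a_l\wedge a_r$ the common bottom. Because $u\in\Ji D$ it has a \emph{unique} lower cover, and by semimodularity and the $4$-cell structure the covering square below forces $a_l$ and $a_r$ to be exactly the two lower covers of $u_*\vee(\text{one step})$ — more precisely, $b_l=a_l\vee u_*$... here I would use Lemma~\ref{lMawhadIssU}'s ``no two $4$-cells share a bottom'' to eliminate any alternative placement. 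So every tight $\SN 7$ with anchor $u$ equals the displayed set, giving uniqueness.

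For the converse, suppose $u$ is the centre (inner dual atom) of a tight $\SN 7$ sublattice $\{u,u^*,a_l,b_l,a_r,b_r,a_l\wedge a_r\}$. Then $u$ has the two distinct covers $b_l\vee u$ and $b_r\vee u$ inside this sublattice (distinct because $a_l\wedge a_r<a_l,a_r$ keeps the two sides apart), so $u$ is meet-reducible: $u=b_l'\wedge b_r'$ fails to be meet-irreducible precisely because it has two incomparable covers, hence $u\in\Mi D$. For $u\in\interior(D)$: if $u$ were on the boundary, then by \cite[Lemma 6]{CSa} boundary considerations and the fact that the $\SC 2$-trajectories through $[a_l,b_l]$ and $[a_r,b_r]$ both pass strictly to the left and right of $u$ respectively would contradict $u$ being extreme on the boundary — essentially, a boundary element cannot have covers (and lower structure) on \emph{both} its left and right simultaneously in the $\SN 7$ configuration, since one side would have to fold back across the boundary. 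Hence $u\in\interior(D)\cap\Mi D$.

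The main obstacle I anticipate is the \emph{uniqueness/exact-shape} step: showing that the two ``down-and-out'' steps to reach $a_l,b_l$ and $a_r,b_r$ land on uniquely determined elements. The delicate part is ruling out that $u$ could be the anchor of two different tight $\SN 7$'s with the same $u^*$ but different bottom portions; this is where ``no two distinct $4$-cells have the same bottom'' (Lemma~\ref{lMawhadIssU}) does the real work, applied to the $4$-cell directly below $u$ (with bottom $u_*$) and the two $4$-cells on its left and right, whose bottoms are forced and whose tops must be $b_l,b_r$. I would spend most of the write-up making this local combinatorics around $u_*$ and $u$ precise, and the existence and converse parts should then be comparatively short.
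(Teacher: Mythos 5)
Your argument is built on a misreading of the hypothesis that invalidates the construction from its first step. In this paper $\Mi D$ denotes the \emph{meet-irreducible} elements, so $u\in\interior(D)\cap\Mi D$ has exactly \emph{one} upper cover, namely $u^*$; it is not assumed (and is in fact false for the anchor of an $\SN 7$) that $u$ is join-irreducible. You instead take $u$ to be meet-\emph{reducible} with two covers $p,q$ and join-irreducible with a unique lower cover $u_*$, and you build a $4$-cell $\{u,p,q,p\vee q\}$ \emph{above} $u$. That configuration is not the tight $\SN 7$ of the lemma: in the intended sublattice $\{u,u^*,a_l,b_l,a_r,b_r,a_l\wedge a_r\}$ the element $u^*$ is the unique cover of $u$ and has the three lower covers $b_l,u,b_r$, while $u$ itself has the two lower covers $a_l,a_r$, so it is join-reducible. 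The same confusion surfaces in your converse, where you assert that $u$ has ``two distinct covers $b_l\vee u$ and $b_r\vee u$'' (in $\SN 7$ both of these joins equal $u^*$) and then conclude, incoherently, that $u$ is meet-reducible ``hence $u\in\Mi D$.''

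The paper's actual proof is short once the hypothesis is read correctly: consider the $\SC 2$-trajectory through $[u,u^*]$; since $u$ is interior, the trajectory continues one step to each side, and since $u$ is meet-irreducible both steps must be \emph{down}-perspectivities, producing $4$-cells $\{a_l,b_l,u,u^*\}$ and $\{a_r,b_r,u,u^*\}$ with $a_l,a_r\prec u$ and $b_l,b_r\prec u^*$; a planarity lemma of Kelly and Rival gives $b_l\wedge b_r\le u$, hence $b_l\wedge b_r=a_l\wedge a_r$, and the seven elements form a tight $\SN 7$. Uniqueness is then immediate because $[u,u^*]$ is the upper edge of at most two $4$-cells, so the two flanking squares---and with them the whole sublattice---are determined by $u$. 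Your instinct to invoke Lemma~\ref{lMawhadIssU} for uniqueness points in a workable direction, but it would have to be applied to the $4$-cells having $[u,u^*]$ as an \emph{upper} edge, not to a square below $u$ with bottom $u_*$. As written, the proposal does not prove the statement.
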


\begin{proof} Assume that $u\in \interior(D) \ii \Mi D$. 
Consider the $\SC 2$-trajectory $T$ containing $[u,u^*]$. 
Since $[u,u^*]$ is not on the boundary of $D$, 
this trajectory makes at least one step to the right, 
to a prime interval $[a_r,b_r]$. 
This step is a down-perspectivity since $u\in\Mi D$. 
Similarly, $T$ makes a down-perspective step to the left,  to $[a_l,b_l]$. 
By D.  Kelly and I. Rival \cite[Lemma 1.2]{KR75}, we obtain easily that $b_l\wedge b_r\leq u$. 
Thus $b_l\wedge b_r=b_l\wedge u\wedge b_r\wedge u=a_l\wedge a_r$, 
and we conclude that $\set{u,u^*, a_l, b_l, a_r,b_r, a_l\wedge a_r}$ 
is a tight $\SN 7$ sublattice of $D$.  

Observe that a tight $\SN 7$ sublattice with center 
$u$ is determined by those covering squares 
(that is, cover-preserving $\SC 2^2$ sublattices) of this sublattice 
that contain $[u,u^*]$ as an upper prime interval. 
But these covering squares are $4$-cells by \eqref{covsqrefourcells}, 
and $[u,u^*]$ is the upper edge of at most two $4$-cells.  
Hence, apart from left-right symmetry, 
there is only one tight $\SN 7$ sublattice with center $u$,
as described in the last paragraph. 
This proves the first two parts of the statement. 
The last part is trivial. 
\end{proof}

As in G.~Gr\"atzer and E.~Knapp~\cite{GK09}, 
a \emph{cover-preserving $m$-stacked $\SN 7$ $($sub\-lattice$)$} of $D$ 
is a cover-preserving sublattice isomorphic 
to the $(7+3m)$-element diagram given, 
for $m=3$, on the right of  Figure~\ref{fig:Nseven}.  
A cover-preserving \text{$0$-stacked} $\SN 7$ is a cover-preserving $\SN 7$. 

\begin{lemma}\label{mstackedlemma} Let $R$ be a cover-preserving $m$-stacked $\SN 7$ sublattice of $D$. Then $R$ is a region of $D$. Furthermore, if $R'$ a  cover-preserving $m$-stacked $\SN 7$ sublattice of $D$ such that $\interior(R) \ii \interior(R')\neq\varnothing$, then $R'=R$.
\end{lemma}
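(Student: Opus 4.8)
The plan is to show first that a cover-preserving $m$-stacked $\SN 7$ sublattice $R$ is a region, and then to establish the uniqueness/non-overlap assertion. For the first part, I would proceed by induction on $m$. The base case $m=0$ is a cover-preserving $\SN 7$; here one takes the two maximal chains of the interval $[0_R,1_R]$ (in the notation of Figure~\ref{fig:Nseven}, going up the left side through $a_l$, $b_l$ and up the right side through $a_r$, $b_r$), checks that they satisfy the ``strictly to the left/right'' and ``meet only at the endpoints'' conditions from the definition of a region, and then argues that nothing of $D$ can lie strictly between them except the center $u$ itself — this is exactly where Lemma~\ref{lMawhadIssU} and \eqref{covsqrefourcells} do the work, since $[u,u^*]$ is the top edge of at most two $4$-cells and the left and right covering squares at $u$ are forced. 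For the inductive step, an $m$-stacked $\SN 7$ is an $(m-1)$-stacked $\SN 7$ with a $\SC 2^2$ (a $4$-cell) glued on top along the prime interval $[u,u^*]$; since gluing a $4$-cell on top of a region along a top prime interval again yields a region (the new bounding chains are the old ones extended through the two sides of the $4$-cell), $R$ remains a region.

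For the second part, suppose $R'$ is another cover-preserving $m$-stacked $\SN 7$ with $\interior(R)\ii\interior(R')\neq\varnothing$. The key observation is that an $m$-stacked $\SN 7$ is determined by its center $u$ — the unique interior dual-atom of its base, which lies in $\interior(D)\ii\Mi D$ by Lemma~\ref{tightnhetlma} — together with the data telling us which $4$-cells of $D$ are stacked above and below. I would argue that an interior element of an $m$-stacked $\SN 7$ that is not the center has exactly two covers and exactly two lower covers and sits on a distinguished ``spine'' of $4$-cells; from such an element one can recover $u$ by walking down (or up) the stack along these $4$-cells until reaching the unique interior meet-irreducible element of the configuration. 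Hence $R$ and $R'$ have the same center $u$. Once the centers agree, Lemma~\ref{tightnhetlma} forces the bases to coincide (apart from the left-right symmetry, which is fixed once we know $R$ and $R'$ are honest subdiagrams of the same planar $D$), and then the $4$-cell structure of $D$ above and below $[u,u^*]$ — each prime interval is the top edge of at most one $4$-cell on each side by \eqref{covsqrefourcells} and Lemma~\ref{lMawhadIssU} — forces the stacked cells, and therefore all of $R'$, to coincide with $R$.

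I expect the main obstacle to be the precise verification that an $m$-stacked $\SN 7$ is in fact a region: one must carefully exhibit the two boundary chains $C_1$, $C_2$ and check planarity conditions (strictly-left, strictly-right, intersection only at top and bottom), and, more delicately, verify that no element of $D$ other than the prescribed ones falls into the region — this requires combining the fact that the cells of $D$ are $4$-cells with no two sharing a bottom (Lemma~\ref{lMawhadIssU}) with a careful local analysis around the center $u$, essentially re-running the covering-square argument from the proof of Lemma~\ref{tightnhetlma} at each level of the stack. A secondary technical point is bookkeeping the left-right mirror symmetry so that ``$R'=R$'' is genuinely an equality of subdiagrams, not merely of abstract sublattices; but since both live inside the fixed planar diagram $D$ and share the center and its two canonical $4$-cells, the orientation is pinned down. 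The rest — propagating equality up and down the stack — is then a routine induction using uniqueness of $4$-cells above a given prime interval.
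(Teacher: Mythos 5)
Your overall strategy coincides with the paper's: show $R$ is a region because it is a union of adjacent $4$-cells, and prove uniqueness by reconstructing the whole configuration from any shared interior element by descending to the centre. However, the descent as you describe it does not work, because your structural claims about the interior are wrong. The interior of an $m$-stacked $\SN 7$ is a chain $x_0\prec\cdots\prec x_m$ with $x_0$ the centre of the base $\SN 7$; for $i\geq 1$ the element $x_i$ has exactly \emph{one} cover and \emph{three} lower covers (the left dual atom, the middle element $x_{i-1}$, and the right dual atom of the level below), not ``exactly two covers and exactly two lower covers'' as you assert. Worse, your stopping rule ``walk down until reaching the unique interior meet-irreducible element'' is vacuous: \emph{every} interior element $x_0,\dots,x_m$ is meet-irreducible, so nothing singles out the centre and the walk has no well-defined endpoint. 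The criterion that actually works, and is exactly what the paper uses, is the count of lower covers: starting from $t\in\interior(R)\ii\interior(R')$, repeatedly pass to the \emph{middle} lower cover; every element passed through has three lower covers, and one stops at the first element having only two, which is $x_0$. The number of steps identifies $t$ as $x_j$ intrinsically in $D$, hence determines $\interior(R)$, hence $R$ itself via its adjacent $4$-cells; the same computation for $R'$ then gives $R'=R$.

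Two smaller points. First, your inductive step for the region claim glues ``a $4$-cell on top along $[u,u^*]$'', but each level of the stack adds \emph{three} new elements (the count $7+3m$ already rules out one $4$-cell per level): the new level consists of \emph{two} $4$-cells sharing a new vertical edge and sitting on the two upper edges of the previous top. Second, for the descent to be intrinsic you need the lower covers of an interior element of $R$ computed in $D$ to agree with those computed in $R$; this is precisely what the region property (the first half of the lemma) provides, so the two halves cannot be treated independently.
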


\begin{proof} Since $R$ consists of adjacent covering squares, which are $4$-cells by 
\eqref{covsqrefourcells}, it follows easily that $R$ is a region. 
Let $x_0\prec \cdots \prec x_m$ be the interior of~$R$. 
Assume that $t\in\interior(R)$. Then $t=x_j$, 
and this $j$ is recognized as follows: 
there is a sequence $t=t_0\succ \cdots \succ t_j$ such that 

(a) $t_j$ has only two lower covers; 

(b) the~$t_i$ have three lower covers for $i\in\set{1,\ldots,j-1}$;

(c) $t_i$ is the middle lower cover of $t_{i-1}$, for $i\in\set{1,\ldots,j}$. 

\noindent It follows that $t$ determines $\interior(R)$, 
which clearly determines the whole $R$ via adjacent $4$-cells. 
Hence if $t\in \interior(R) \ii \interior(R')$, then $R=R'$.
\end{proof}

In view of Lemma~\ref{mstackedlemma}, cover-preserving $m$-stacked $\SN 7$ sublattices of $D$ are  also called \emph{$m$-stacked $\SN 7$ regions}.
%Let $\tup{LC}_3(D)$ denote the set of all elements in $D$ 
%that have exactly three lower covers. 
For a meet-irreducible element $x \in D$ in the interior of $D$
define $x(0)=x$. If the meet-irreducible element $x(i)$ is already defined
and $x(i)^*$ 
\begin{enumeratea}
\item is meet-irreducible,
\item is in the interior of $D$,
\item covers exactly three elements,
\end{enumeratea}
then define $x(i + 1) = x(i)^*$. 
The \emph{rank} of $x$, $\rank_{D}(x)$,  
is the largest $m$ such that $x(m)$ is defined. 
By \eqref{sLczREQCv},  each $x\in\anchor 2D$ has a rank. 
For another description of $\rank_{D}(x)$, 
where $x\in\anchor 2D$, see Corollary~\ref{eZaranG}.

\begin{lemma}\label{sTcklmMa} Let $D$ be a slim semimodular diagram.  
Let $x \in \anchor2D$ and $\rank_{D}(x) = m$. 
Then the following statements hold:
\begin{enumeratei}
\item\label{sTcklmMaa} The element $x$ has exactly two lower covers. 
\item\label{sTcklmMab} For $i\in\set{0,\ldots, m}$, 
there exists a unique $i$-stacked $\SN 7$ region $R_i$ of $D$ 
such that $\interior(R_i) = \set{x = x(0) \prec \cdots \prec x(i)}$.
\item\label{sTcklmMac} The interior of the $\SC 2$-scheme anchored 
by $x$ is $\interior(R_m)$.
\end{enumeratei}
\end{lemma}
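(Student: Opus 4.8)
The plan is to prove the three parts in the natural order, bootstrapping from Lemma~\ref{tightnhetlma} and induction along the chain $x=x(0)\prec x(1)\prec\cdots\prec x(m)$.

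First, for part~\eqref{sTcklmMaa}: since $x\in\anchor2D$, there is a $\SC2$-scheme anchored by $x$, so $x$ is the center of a tight $\SN7$ sublattice. As recalled after \eqref{covsqrefourcells}, an interior element of a slim semimodular diagram has at most two covers; I would note that the center $x$ of a tight $\SN7$ in fact has exactly two \emph{lower} covers, because in $\SN7$ the inner dual atom $u$ satisfies $u\gtrdot a_l\wedge a_r$ on two distinct sides $a_l,a_r$, and these are genuine covers of $x$ by tightness. (Equivalently, one invokes Lemma~\ref{tightnhetlma}, which exhibits the two lower covers $a_l,a_r$ explicitly.)

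For part~\eqref{sTcklmMab}, I would argue by induction on $i$. The base case $i=0$ is exactly Lemma~\ref{tightnhetlma}: $R_0$ is the tight $\SN7$ with center $x$, whose interior is $\set{x=x(0)}$; uniqueness is the uniqueness clause there (together with the fact, to be checked, that a cover-preserving $\SN7$ region with interior $\set{x}$ must be tight, since the interval $[x,x^*]$ must be covering). For the inductive step, suppose $R_i$ exists with interior $\set{x(0)\prec\cdots\prec x(i)}$ and $i<m=\rank_D(x)$; then by definition of rank, $x(i)^*=x(i+1)$ is meet-irreducible, interior, and covers exactly three elements. I would glue one more ``layer'' of two $4$-cells on top of $R_i$ along the covering edge $[x(i),x(i)^*]$: the three lower covers of $x(i+1)$, two of which lie on the upper boundary of $R_i$ and one of which is $x(i)$, force (via Lemma~\ref{lMawhadIssU}, the $4$-cell structure, and \eqref{covsqrefourcells}) the existence of exactly two new $4$-cells above $R_i$, extending it to a cover-preserving $(i+1)$-stacked $\SN7$ region $R_{i+1}$ with interior $\set{x(0)\prec\cdots\prec x(i+1)}$. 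Uniqueness follows from Lemma~\ref{mstackedlemma}: any other $(i+1)$-stacked $\SN7$ region with the same interior shares an interior point with $R_{i+1}$, hence equals it.

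For part~\eqref{sTcklmMac}, let $S$ be the $\SC2$-scheme anchored by $x$, with base the tight $\SN7$ from part~\eqref{sTcklmMab} ($=R_0$). The interior of $S$ is a chain $x=y(0)\prec y(1)\prec\cdots$ running up the middle of the scheme (each $y(j)$ meet-irreducible and interior, being a middle element of the stacked structure), and I would show this chain coincides with $x(0)\prec\cdots\prec x(m)$. One direction: each $y(j)$ with $j\geq1$ is the top covering edge of the previous layer, so $y(j)^*=y(j+1)$ whenever $S$ extends further, and $y(j)$ covers exactly three elements as long as it is in the interior of $S$; hence $y(j)=x(j)$ by induction and $m\geq$ (length of the interior chain of $S$). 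Conversely, $x(m)^*$ fails one of conditions (a)--(c): either it is join-reducible from above in a way that stops the scheme, or it has only two lower covers, or it hits the boundary --- in each case this is precisely the condition that terminates the wings of the $\SC2$-scheme, so the interior chain of $S$ has length exactly $m$ and equals $\interior(R_m)$.

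The main obstacle I expect is the inductive step of part~\eqref{sTcklmMab}: one must verify carefully that the hypothesis ``$x(i+1)$ covers exactly three elements, with $x(i)$ the middle one'' genuinely forces the new layer to consist of two $4$-cells glued in the right positions --- i.e.\ that the two non-middle lower covers of $x(i+1)$ are exactly the top corners of $R_i$ and that no extra cells intrude. This uses planarity, the characterization of $4$-cells in \eqref{covsqrefourcells}, the ``no two $4$-cells with the same bottom'' clause of Lemma~\ref{lMawhadIssU}, and the already-proven rigidity from Lemma~\ref{mstackedlemma}; assembling these into a clean argument is the delicate point, whereas parts~\eqref{sTcklmMaa} and~\eqref{sTcklmMac} are then comparatively routine.
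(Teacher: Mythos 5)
Your proposal is correct and follows essentially the same route as the paper: part~\eqref{sTcklmMaa} is dismissed as trivial there, part~\eqref{sTcklmMab} is proved by the same induction (base case from Lemma~\ref{tightnhetlma}, inductive step identifying the three lower covers of $x(i+1)$ with the dual atoms of $R_i$ and adding one layer, uniqueness from Lemma~\ref{mstackedlemma}), and part~\eqref{sTcklmMac} is declared obvious. The ``delicate point'' you flag in the inductive step is handled in the paper simply by applying Lemma~\ref{tightnhetlma} once more with the top of the previous region as the center: this produces the two new $4$-cells via the left and right down-steps of the trajectory through $[x(i),x(i)^*]$ and gives the equality $a(i+1)_l\wedge a(i+1)_r=a(i)_l\wedge a(i)_r$ needed for the enlarged region to be a cover-preserving stacked $\SN 7$.
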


\begin{proof} \eqref{sTcklmMaa} is trivial.

To prove \eqref{sTcklmMab}, let $H(i)$ denote the condition 
``there exists a unique $i$-stacked $\SN 7$ region $R_i$ of $D$ such that 
$\interior(R_i)=\set{x=x(0)\prec\cdots\prec x(i)}$''. 
Observe that $x$ is the center of a cover-preserving $\SN 7$ sublattice $R_0$ 
by definition. It is a $0$-stacked $\SN 7$ region. 
Since $R_0$ is also a tight $\SN 7$ sublattice, 
$R_0$ is uniquely determined by Lemma~\ref{tightnhetlma}. 
This proves $H(0)$. 

Next, let $1\leq i\leq m$ and assume that $H(i-1)$ holds. 
Since $x(i-1)$, the anchor of $R_{i-1}$, has only one cover, 
it follows that
$x(i)$ is the top of $R_{i-1}$; 
for an illustration, see the diagram 
on the right of Figure~\ref{fig:Nseven} with $i=2$. 
Since $x(i)$ is defined, 
it satisfies (a)--(c). 
Hence the lower covers of $x(i)$ in $D$ are exactly the same 
as the dual atoms of $R_{i-1}$, namely, 
the left dual atom $a(i)_l$, the anchor $x(i-1)$, 
and the right dual atom $a(i)_r$ of $R_{i-1}$. 
Since $x(i)\in\interior(D) \ii  \Mi D$, 
the right wing starting from 
$[x(i), x(i)^*]$ has to make its first step downwards 
to $[a(i)_r,a(i+1)_r]$, 
where $a(i+1)_r$ is a uniquely determined element of $D$
because 
\[
   \set{a(i)_r,x(i), a(i+1)_r, x(i)\vee a(i+1)_r}
\] 
is a $4$-cell of $D$. 
By left-right symmetry, we also obtain a unique $4$-cell 
\[
   \set{a(i)_l,x(i), a(i+1)_l, x(i)\vee a(i+1)_l}.
\] 
Since 
\[\set{a(i)_l,a(i+1)_l,x(i), a(i)_r,a(i+1)_r}\] 
generates a (unique) tight $\SN 7$ sublattice by Lemma~\ref{tightnhetlma}, 
it follows that
\[
   a(i+1)_l\wedge a(i+1)_r=a(i)_l\wedge a(i)_r.
\]
This together with the fact that $R_{i-1}$ 
is a cover-preserving $(i-1)$-stacked $\SN 7$ sublattice 
implies that 
\[
   R_i=R_{i-1} \uu \set{a(i+1)_l,x(i)^*,a(i+1)_r}
\] 
is a cover-preserving $i$-stacked $\SN 7$ region 
with interior $\set{x=x(0)\prec \cdots\prec x(i)}$. 
The uniqueness of this region follows from Lemma~\ref{mstackedlemma}. 
Hence $H(i)$ holds for all $i\in\set{0,\ldots,m}$, 
proving part \eqref{sTcklmMab} of the lemma. 

Finally, \eqref{sTcklmMac} is obvious.
\end{proof}

\begin{corollary}\label{eZaranG} Let $D$ be a slim semimodular diagram, 
and let $x\in\anchor2D$. 
Then $\rank_{D}(x)$ is the largest number in the set 
\[\setm{k}{x \text{ is the middle atom of a $k$-stacked $\SN 7$-region}}.\]
\end{corollary}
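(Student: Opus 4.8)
The plan is to read the statement of Corollary~\ref{eZaranG} as an almost immediate consequence of Lemma~\ref{sTcklmMa}, and the proof is really just an unpacking of what has already been established there. Write $m=\rank_D(x)$ and $K=\setm{k}{x\text{ is the middle atom of a }k\text{-stacked }\SN 7\text{-region}}$. The claim is that $m=\max K$. First I would show $m\in K$: by Lemma~\ref{sTcklmMa}\eqref{sTcklmMab} applied with $i=m$, there is an $m$-stacked $\SN 7$ region $R_m$ whose interior is $\set{x=x(0)\prec\cdots\prec x(m)}$; since $x$ is the bottom interior element of $R_m$, it is the middle atom of $R_m$ (the $m$-stacked $\SN 7$ has its ``middle atom'' at the very bottom of its interior, by the description in Figure~\ref{fig:Nseven}). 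Hence $m\in K$, so $\max K\geq m$.

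Next I would establish that no $k>m$ lies in $K$, i.e. $\max K\leq m$. Suppose $x$ is the middle atom of some $k$-stacked $\SN 7$ region $R$. The interior of $R$ is a cover-preserving chain $x=y_0\prec y_1\prec\cdots\prec y_k$ starting at $x$, and by the structure of a $k$-stacked $\SN 7$ each $y_j$ with $j<k$ is meet-irreducible, lies in the interior of $D$ (because $R$ is a region of $D$ and these are genuinely interior vertices of that region — here I would lean on Lemma~\ref{mstackedlemma}, which says $R$ is a region, together with the observation that the interior elements of $R$ have three lower covers inside $R$ and hence in $D$), and covers exactly three elements. Therefore each $y_j=x(j)$ is defined for $j=0,\ldots,k$ by the very recursion defining rank: given $x(j)=y_j$ with $j<k$, the element $x(j)^*=y_{j+1}$ satisfies conditions (a), (b), (c), so $x(j+1)$ is defined and equals $y_{j+1}$. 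Consequently $x(k)$ is defined, whence $k\leq m$ by the definition of $\rank_D(x)$ as the largest such index. This gives $\max K\leq m$.

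Combining the two inequalities yields $m=\max K$, which is the assertion. The only point that needs a little care — and the place I would expect a referee to push back — is the claim that an interior vertex $y_j$ ($j<k$) of the $k$-stacked region $R$ is interior in $D$ and has exactly three lower covers in $D$ (not merely in $R$). For the lower-cover count this is clear because $R$ is cover-preserving and an interior element of a $k$-stacked $\SN 7$ already has three lower covers within $R$, while no element of a slim semimodular diagram has more than two covers (by \cite{GK07}, \cite[Lemma 2]{CSa}); dually one should check it has exactly one cover in $D$, which again follows from cover-preservation and the shape of $R$. For interiority in $D$ one invokes Lemma~\ref{mstackedlemma}: $R$ is a region of $D$, and the $y_j$ with $0<j<k$ are strictly inside that region, hence strictly inside $D$; the case $j=0$, i.e. that $x$ itself is interior, is part of \eqref{sLczREQCv} since $x\in\anchor 2D$. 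With these small verifications in hand the corollary follows, and I would keep the written proof to a few lines, citing Lemma~\ref{sTcklmMa} and Lemma~\ref{mstackedlemma} and leaving the cover-count bookkeeping to the reader.
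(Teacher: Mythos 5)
Your proof is correct and matches the paper's intent exactly: the paper states this as an unproved corollary of Lemma~\ref{sTcklmMa}, and your argument is precisely the expected unpacking --- part \eqref{sTcklmMab} of that lemma gives $m\in K$, and running the rank recursion along the interior chain of an arbitrary $k$-stacked $\SN 7$ region (using Lemma~\ref{mstackedlemma} to see that this region is a region of $D$, so that cover counts and interiority transfer from $R$ to $D$) gives $\max K\le m$. One minor indexing slip: the elements that must satisfy conditions (a)--(c) are $y_1,\dots,y_k$, not ``each $y_j$ with $j<k$'' (indeed $y_0=x$ has only \emph{two} lower covers), but your subsequent inductive sentence correctly applies (a)--(c) to $x(j)^*=y_{j+1}$, so the argument itself is sound.
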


\section{The proof of the main result}\label{SecMainProof}

We start with a simple consequence of Lemma~\ref{lMawhadIssU}:

\begin{lemma}\label{insreslmM}\hfill
\begin{enumeratei}
\item\label{insreslmMa} Let $D$ be a slim semimodular diagram 
and let $D'$ be obtained from $D$ by a resection at $u\in \anchor3D$. 
Then $D'$ is also a slim semimodular diagram, $u\in\anchor2{D'}$, 
and up to similarity, $D$ is obtained from $D'$ by an insertion at $u$.
\item\label{insreslmMb} Conversely, let $D$ be a slim semimodular diagram 
and let $D'$ be obtained from $D$ by an insertion at $u\in \anchor2D$. 
Then $D'$ is also a slim semimodular diagram, $u\in\anchor3{D'}$, 
and up to similarity, $D$ is obtained from $D'$ by a resection at $u$.
\end{enumeratei} 
\end{lemma}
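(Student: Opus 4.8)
The plan is to prove the two halves of Lemma~\ref{insreslmM} essentially as mirror images of each other, with Lemma~\ref{lMawhadIssU} (the $4$-cell characterization of slim semimodular diagrams) doing the heavy lifting, together with the structural analysis of $\SC 2$-schemes and $\SC 3$-schemes already set up in Section~\ref{SecSchemes}. For part~\eqref{insreslmMa}, start with a slim semimodular diagram $D$ and a $\SC 3$-scheme $S$ of $D$ anchored by $u\in\anchor 3D$. By the definition of resection, $D'$ is obtained from $D$ by deleting all interior elements of $S$ except $u$. First I would check that $D'$ is still a planar lattice diagram: the deleted elements form the interior of a region (the scheme is built from adjacent $\SC 3\times\SC 2$'s, hence from $4$-cells, so it is a region by the argument already used in Lemma~\ref{mstackedlemma}), so removing the claimed elements leaves the boundary intact and the remaining diagram is again a region-based planar diagram with $u$ now a meet-irreducible interior element having exactly two lower covers. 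Then I would apply Lemma~\ref{lMawhadIssU}: every cell of $D'$ is still a $4$-cell — the only cells that changed are those inside the old base $B$, and after the resection the base collapses to a cover-preserving $\SN 7$ whose cells are $4$-cells — and no two distinct $4$-cells of $D'$ share a bottom, since $D'$'s $4$-cells are a subset of $D$'s $4$-cells together with the new $4$-cells coming from the $\SN 7$, and one checks the bottoms are still distinct using that the deleted part was the interior of a scheme. Hence $D'$ is slim semimodular. That $u\in\anchor 2{D'}$ is then immediate from the description in Section~\ref{SectConstr}: the base $B$ turns into a cover-preserving $\SN 7$ and the wings $W_l,W_r$ of $S$, which terminated on the boundary of $D$ and are untouched by the resection (they consist of $\SC 3$-chains lying on the upper/lower boundary of $S$), become the wings of a $\SC 2$-scheme of $D'$ with the same anchor $u$, which still reaches the boundary since the boundary of $D$ is preserved.

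For the ``up to similarity'' clauses, I would observe that an insertion at $u\in\anchor 2{D'}$ is the literal set-theoretic reverse of the deletion: given the $\SC 2$-scheme anchored by $u$ in $D'$, Lemma~\ref{sTcklmMa}\eqref{sTcklmMac} identifies its interior with $\interior(R_m)$ where $m=\rank_{D'}(u)$, and the insertion re-introduces exactly the $m$-stacked structure together with the two wings' worth of interior elements; by Lemma~\ref{mstackedlemma} and the uniqueness statements in Lemma~\ref{sTcklmMa}, the reconstructed interior, and hence the reconstructed diagram, is determined up to similarity, and it agrees with $D$ because $D$'s scheme had $u$ as anchor with the same rank. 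Part~\eqref{insreslmMb} is the same argument run backwards: start from $D$ slim semimodular with $u\in\anchor 2D$, let $m=\rank_D(u)$, insert the new interior elements to form the $\SC 3$-scheme, and verify via Lemma~\ref{lMawhadIssU} that the enlarged diagram $D'$ is still planar with all cells $4$-cells and no two $4$-cells sharing a bottom — here the new $4$-cells are precisely those of the $\SC 3\times\SC 3$ base and the stacked columns, whose bottoms are new elements or the old anchor's neighbourhood, and distinctness follows because the insertion is performed in the interior of a region. Then $u\in\anchor 3{D'}$ and resecting at $u$ returns $D$ up to similarity.

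The main obstacle I anticipate is not the semimodularity check itself but verifying cleanly that the two operations are mutually inverse \emph{up to similarity} — i.e.\ that no extra or missing elements, and no reordering of covers, can sneak in. This requires pinning down that a $\SC 3$-scheme is uniquely determined by its anchor (stated in Section~\ref{SectConstr}) and, crucially, that the interior of the scheme is exactly the $m$-stacked $\SN 7$ region of the induced $\SC 2$-scheme together with the wing interiors, which is where Lemma~\ref{sTcklmMa} and Corollary~\ref{eZaranG} are needed; the rank-matching argument ($\rank_D(u)$ for the $\SC 3$-scheme side must equal $\rank_{D'}(u)$ for the $\SC 2$-scheme side) is the delicate point, since it is what guarantees the stacked part has the right height after the round trip. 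A secondary subtlety is confirming that the wings $W_l,W_r$ are genuinely unaffected by the operation and still reach the boundary, so that the output really is a scheme of the required type; this is plausible from planarity (the wings lie on the boundary of the scheme-region, which is disjoint from the interior being added or removed) but should be stated explicitly.
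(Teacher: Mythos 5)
Your overall strategy---verify the four-cell criterion of Lemma~\ref{lMawhadIssU} for the modified diagram, then check the anchor condition and mutual inverseness---is exactly what the paper intends; the paper offers no written proof at all, introducing the lemma only as ``a simple consequence of Lemma~\ref{lMawhadIssU}.'' However, your execution rests on a false premise about which elements a resection deletes. You assert that ``the only cells that changed are those inside the old base $B$'' and that the wings $W_l,W_r$ ``are untouched by the resection (they consist of $\SC 3$-chains lying on the upper/lower boundary of $S$).'' This contradicts the definition (and your own opening sentence): each $\SC 3$-chain of a wing has its top on the upper boundary of $S$, its bottom on the lower boundary, and its \emph{middle element in the interior of $S$}, so the resection deletes the middle element of every wing chain, collapsing each $\SC 3\times\SC 2$ of the wings into a single $4$-cell. (This is what the introduction means by ``deletes some more elements \dots\ to preserve semimodularity.'') If the wings really were left untouched, the cell of $D'$ sitting between the new $\SN 7$ and the adjacent, still three-element, wing chain would be a pentagon, so $D'$ would \emph{fail} Lemma~\ref{lMawhadIssU}; moreover, the wings of a $\SC 2$-scheme are sequences of prime intervals, so untouched $\SC 3$-chains could not serve as its wings and $u\in\anchor2{D'}$ would not follow.

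This misidentification also hides the one place where the hypothesis that $W_l$ and $W_r$ terminate on the boundary of $D$ is actually needed: the deletion of wing middles propagates outward cell by cell, and it can stop without creating a non-$4$-cell only because the last chain lies on the boundary of $D$, so there is no cell on its far side. Symmetrically, in part~\eqref{insreslmMb} the elements added by an insertion are one new middle element for each prime interval of the two wings plus the two elements completing the base $\SN 7$ to $\SC 3^2$---not, as you write, ``the stacked columns,'' which are not enlarged by the insertion. Once the set of deleted (respectively added) elements is corrected, your four-cell and bottom-uniqueness checks do go through---the new $4$-cells in the wings inherit the bottoms of $4$-cells they replace---and the remainder of your argument (a scheme is determined by its anchor, hence the two operations are inverse up to similarity) is sound and in the spirit the paper intends.
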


The following lemma is the major step in the proof of Theorem~\ref{thmmain}. 
Note that the inclusions in it are actually equalities, 
but we do not need---and do not prove---this.

\begin{lemma}\label{mainlemma} 
Let $D$ be a slim semimodular diagram and assume that $u\in\anchor2D$.  
Let $D'$ denote the diagram obtained from $D$ by performing an insertion at $u$. 

If $\rank_{D}(u)=0$, 
then \[\anchor 2{D'}\subseteq \anchor2D-\set{u}.\] 

If $\rank_{D}(u)>0$, 
then \[\anchor 2{D'}\subseteq (\anchor 2D-\set u) \uu \set{u^*},\]  
and $\rank_{D'}(u^*)=\rank_{D}(u)-1$.
\end{lemma}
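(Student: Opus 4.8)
The plan is to analyze how the local structure around the anchor $u$ changes when we insert, and to show that every $\SC2$-scheme of $D'$ is ``essentially already there'' in $D$, with the single exception that when $\rank_D(u)>0$ the old scheme anchored at $u$ reappears one level higher, anchored at $u^*$. Concretely, an element $v\in\anchor2{D'}$ must lie in $\interior(D')\cap\Mi{D'}$ by \eqref{sLczREQCv}. The insertion only alters the diagram inside the $\SC3$-scheme $S'$ it creates (equivalently, inside the $\SC2$-scheme $S$ anchored at $u$ in $D$): outside $\interior(S)$ the diagrams $D$ and $D'$ agree. So the first step is to dispose of the easy case $v\notin\interior(S)$: there the covering relations, meet-irreducibility, and being interior are all unaffected, and by Lemma~\ref{tightnhetlma} the tight $\SN7$ with anchor $v$ is the same in both diagrams, hence the $\SC2$-scheme it determines is the same; so $v\in\anchor2D$, and $v\neq u$ since $u$ now has a unique cover $u^*$ in $D'$ and is therefore not even in $\Mi{D'}$ in the required way (more precisely $u$ ceases to anchor a $\SC2$-scheme because its $[u,u^*]$-trajectory no longer has the required down-steps on both sides inside $S'$). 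That already yields the first displayed inclusion when $\rank_D(u)=0$, because in that case $\interior(S)=\{u\}$ by Lemma~\ref{sTcklmMa}\eqref{sTcklmMac}, so there is no ``inside'' case to consider.

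The second step handles $v\in\interior(S)$ when $\rank_D(u)>0$. By Lemma~\ref{sTcklmMa}, $\interior(S)=\interior(R_m)=\{u=x(0)\prec\dots\prec x(m)\}$, and the insertion replaces this stacked $\SN7$ interior by the corresponding $\SC3$-scheme interior. I would compute directly what the new covering structure looks like: $u$ acquires the single cover $u^*$ (a newly inserted element at the bottom of the inserted block), $u^*$ covers exactly three elements (the two old dual atoms of $R_0$ together with $u$), and $u^*$ is meet-irreducible and interior. Thus $u^*\in\interior(D')\cap\Mi{D'}$, so it has a rank in $D'$; and running the $\rank_{D'}$ recursion from $u^*$ walks exactly up the chain $x(1)\prec\dots\prec x(m)$ — the same elements that witnessed $\rank_D(u)=m$ — because conditions (a)--(c) for those elements are unchanged by the insertion (the insertion only added structure strictly below $x(1)$). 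Hence $\rank_{D'}(u^*)=m-1=\rank_D(u)-1$. It then remains to check that no other element of the inserted block lies in $\anchor2{D'}$: the genuinely new interior elements of the $\SC3$-scheme are the ``extra'' column/row elements, which are join-irreducible or join-reducible-but-on-a-wing in a way that prevents the two-sided down-step condition of Lemma~\ref{tightnhetlma}; and the elements $x(1),\dots,x(m)$, while still meet-irreducible and interior, now sit strictly above the anchor $u^*$ in the new stacked $\SN7$, so their $\SC2$-trajectories do not make a down-step on both sides — arguing as in the proof of Lemma~\ref{tightnhetlma}, a tight $\SN7$ anchored at $x(j)$ would force $x(j)$ to be the center, contradicting its position. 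I would phrase this last point using Corollary~\ref{eZaranG}: being in $\anchor2{D'}$ means being the middle atom of some stacked $\SN7$ region of $D'$, and the only stacked $\SN7$ region meeting the inserted block in its interior is $R_m'$ (by Lemma~\ref{mstackedlemma}, applied in $D'$), whose middle atom is $u^*$, not $x(j)$.

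Assembling: in the $\rank_D(u)>0$ case, every $v\in\anchor2{D'}$ is either outside $\interior(S)$, hence in $\anchor2D\setminus\{u\}$, or equal to $u^*$; this gives $\anchor2{D'}\subseteq(\anchor2D\setminus\{u\})\cup\{u^*\}$, and the rank computation gives $\rank_{D'}(u^*)=\rank_D(u)-1$.

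I expect the main obstacle to be the bookkeeping in the ``inside'' case: one must pin down precisely the new covering diagram produced by an insertion on an $m$-stacked $\SN7$ region, and then argue cleanly that (i) the old witnesses $x(1),\dots,x(m)$ still satisfy (a)--(c) in $D'$, and (ii) none of the newly inserted elements, nor the $x(j)$ for $j\ge1$, can anchor a $\SC2$-scheme. Point (ii) is where I would lean hardest on Lemma~\ref{mstackedlemma} (uniqueness of the stacked region through an interior point) and on the ``center is forced'' argument from the proof of Lemma~\ref{tightnhetlma}, rather than on case analysis of the diagram; the rest is a matter of carefully reading off the local picture from Lemma~\ref{sTcklmMa} and the definition of insertion. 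The $\rank_D(u)=0$ case should be essentially immediate once the general ``outside $\interior(S)$'' principle is established, since then $\interior(S)=\{u\}$ and the inserted $\SC3$-scheme is the minimal one (base $\SC3^2$ plus its wings), whose only interior element is the anchor $u$ itself, which is no longer a $\SC2$-anchor.
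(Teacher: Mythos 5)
There is a genuine gap, and it sits at the very center of your argument. Your organizing claim --- ``outside $\interior(S)$ the diagrams $D$ and $D'$ agree,'' so that for $v\notin\interior(S)$ the covers, meet-irreducibility and the tight $\SN 7$ of $v$ are unchanged --- is false. An insertion subdivides \emph{every} prime interval of both wings of $S$ (each $[q,q']$ becomes $q\prec n\prec q'$) and replaces the base $\SN 7$ by a $\SC 3^2$; hence every element lying on the upper or lower boundary of the wings --- these are in $S$ but \emph{not} in $\interior(S)$, since $\interior(S)=\{x(0),\dots,x(m)\}$ by Lemma~\ref{sTcklmMa} --- changes its set of covers or of lower covers. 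For such an element $w$, the tight $\SN 7$ with centre $w$ is in general a \emph{different} sublattice in $D$ and in $D'$, and one must argue separately that the $D$-version is cover-preserving. This is precisely the content of the paper's Cases 1--3: an anchor on the lower boundary of $S$ where a wing turns down, an anchor just above the upper boundary whose two lower covers both lie on that boundary (where the configuration $w=p_r$ must be excluded), and an anchor on the upper boundary itself, where the paper exhibits two genuinely distinct $\SN 7$'s ($\{t_0,t_1,t_2,t_6,t_7,t_8,t_9\}$ in $D'$ versus $\{t_0,\dots,t_5,t_9\}$ in $D$) and checks the coverings $t_3\prec t_6$, $t_3\prec t_4$. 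Your ``easy case'' silently absorbs all of this work and disposes of it with a justification that does not hold; in particular your claim that for $\rank_D(u)=0$ ``there is no inside case to consider'' leaves the entire substance of the proof untouched.

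A secondary but real problem is that your local picture of the insertion is wrong. The base $\SN 7$ becomes a $\SC 3^2$ by adding \emph{two} elements, each covering $u$ and one of the outer dual atoms $b_l,b_r$; so in $D'$ the element $u$ acquires two covers and simply fails to be meet-irreducible (that, via \eqref{sLczREQCv}, is why $u\notin\anchor2{D'}$), while $u^*$ in the statement is the \emph{old} top of the base $\SN 7$, i.e.\ $x(1)$, not a newly inserted element covering $b_l,u,b_r$. With your picture the rank recursion from your ``new $u^*$'' would terminate immediately (its unique cover would have only one lower cover), giving rank $0$ rather than $m-1$. Likewise ``the insertion only added structure strictly below $x(1)$'' is false: the wings climb the two sides of the $m$-stacked region $R_m$ up to its top $v$ and beyond, so new elements are inserted at every level; the correct observation is that each $x(i)$, $i\ge 2$, keeps exactly three lower covers (the two outer ones being replaced by new elements), whereas $x(1)$ drops to two, which is what makes $\rank_{D'}(u^*)=\rank_D(u)-1$ come out right. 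Your instinct to use Lemma~\ref{mstackedlemma} and Corollary~\ref{eZaranG} for the interior elements $x(2),\dots,x(m)$ is reasonable and close to the paper's Case 4, but the proof as proposed cannot be completed without restoring the boundary case analysis it discards.
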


\begin{figure}
\centerline{\includegraphics[scale=1.0]{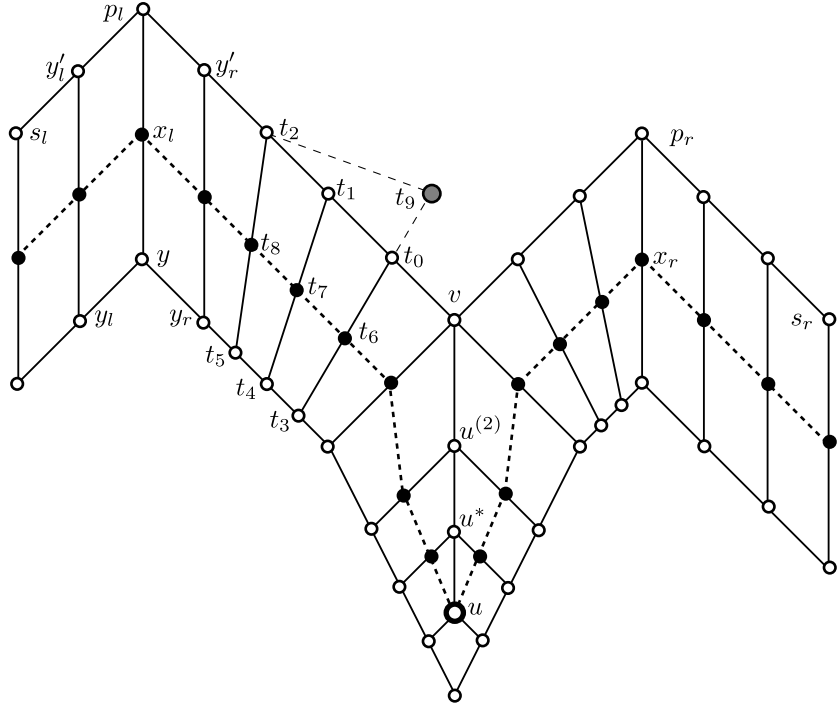}}
\caption{Insertion at $u$ ($t_9$ plays a role only in Case 3)}  \label{fig:system}
\end{figure}

\begin{proof} Let $S$ denote the $\SC 2$-scheme anchored by $u$.  
Let $I$ be the order-ideal of~$D$ generated by the lower boundary of $S$ and 
let $F$ be order filter generated by the upper boundary of $S$. 
Since $I \ii  S$ is the lower boundary of $S$ 
and $F \ii  S$ is the upper boundary  of $S$,
planarity implies that, for all $x_1,x_2\in D$,  
\begin{equation}\label{wizdzsconW}
\text{if $x_1\prec x_2$, $x_2\in F-S$, 
      and $x_1\notin F-S$, then $x_1\in F \ii  S$.}
\end{equation} 
By Lemma~\ref{sTcklmMa}, $u$ is the inner atom 
of a unique $\rank_{D}(u)$-stacked $\SN 7$ region, 
whose top we denote by $v$, see Figure~\ref{fig:system}. 
(Note that we utilize that $t_9$ exists and is placed in the diagram
as shown only in Case 3; in general, $t_9$ is not in $S$, and it may not be in $D$.)  
Let $p_l$ and $p_r$ denote the top elements of the wings. 
Let $s_l$ and $s_r$ denote the largest elements 
of the wings on the boundary of $D$. 
It is possible that  $s_l = p_l$, or $p_l = v$, or $s_r = p_r$, or $p_r= v$.

If $x_l$ in Figure~\ref{fig:system} 
(the element of $D'-D$ covered by $p_l$) 
belongs to  $\interior(D')$, then it has at least three lower covers in $D'$;
similarly for $x_r$. All the other elements of~$D'- D$ are meet-reducible. 
Thus $\anchor2{D'}\subseteq D$. 
So we have to show that every element $w$ of  
$D \ii  \anchor2{D'} = \anchor2{D'}$ is in $\anchor 2D$. 

Since $D$ can be partitioned into
\[
   I \uu (F-S) \uu (F \ii S) \uu  (D - (I \uu F)),
\]
the condition $w \in D$ splits into four cases as to which block in this partition $w$ belongs to.

\emph{Case 1: $w\in I$.}  
If $w\notin S$, then $w^* \in I\subseteq D$ by the dual of \eqref{wizdzsconW}, 
the unique cover-preserving $\SN 7$ sublattice 
is in $I\subseteq D$, and $w\in\anchor2D$, as required by the lemma. 
Therefore, we can assume that $w$ belongs to the lower boundary of $S$. 
Since $w\in\interior(D') \ii  {\Mi{D'}}$, 
it has to be where a wing (properly) turns down, $w = y$ in Figure~\ref{fig:system}
(or symmetrically, on the right).  
It~has exactly two lower covers by Lemma~\ref{sTcklmMa}.  
Thus these lower covers, $y_l$ and $y_r$ in Figure~\ref{fig:system}, 
also belong to the lower boundary of $S$.  
We use the notation $y_l'$ and $y_r'$ as in Figure~\ref{fig:system}. 
Lemma~\ref{tightnhetlma} yields that 
$\set{y,p_l, y_l, y_l', y_r,y_r', y_l\wedge y_r}$ 
is a tight $\SN 7$ sublattice of $D$. 
Since $y\in\anchor2{D'}$ yields that 
$y_l\wedge y_r\prec y_l$ and $y_l\wedge y_r\prec y_r$, 
this tight $\SN 7$ sublattice is a cover-preserving $\SN 7$ sublattice. 
Hence $y \in \anchor2D$, as required. 

\emph{Case 2: $w\in F-S$.}  
The element $w$ has exactly two lower covers, $w_l$ and $w_r$, 
by Lemma~\ref{sTcklmMa}. 
They belong to $F$, and we have that $w_l\wedge w_r\prec w_l$ 
and $w_l\wedge w_r\prec w_r$. 
If at least one of $w_l$ and $w_r$ does not belong to~$S$ 
(equivalently, to its the upper boundary, $F \ii  S$), 
then $w_l\wedge w_r\in F$ by \eqref{wizdzsconW}, 
whence the  cover-preserving $\SN 7$ sublattice 
determined by $w$ belongs to $F$ and $w\in\anchor2D$, as required. 
Hence we can assume that $w_l$ and $w_r$ are 
on the upper boundary of $S$ but $w_l\wedge w_r\notin F$. 
Since $w_l\parallel w_r$, the only possibility, up to left-right symmetry, 
is that $w=w_l\vee w_r$ equals $p_r$. 
However, this case is excluded by Lemma~\ref{sTcklmMa} 
since $p_r$ has at least three lower covers.  

\emph{Case 3: $w \in F \ii S$.}  
Let $w = t_1$ be on the upper boundary of $S$, as in Figure~\ref{fig:system}. 
Since it has only two lower covers by Lemma~\ref{sTcklmMa} 
and belongs to $\interior(D')$, 
we conclude that $t_1\notin\set{v,p_l,p_r, s_l, s_r}$. 
Hence there are elements $t_0,t_2$ in the upper boundary of $S$, 
in the same wing as $t_1$, such that $t_0\prec t_1\prec t_2$. 
We~use the notation $t_4,\ldots, t_8$ as in Figure~\ref{fig:system}. 
Consider the cover-preserving $\SN 7$ sublattice in $D'$ 
that is anchored by $t_1$. 
Since this sublattice is also a tight $\SN 7$ sublattice in $D'$, 
it is  $\set{t_0,t_1,t_2,t_6,t_7,t_8, t_9}$ 
with rightmost dual atom $t_9$ by Lemma~\ref{tightnhetlma}.  
Therefore, applying Lemma~\ref{tightnhetlma} again, 
the tight $\SN 7$ sublattice determined by  
$t_1$ in $D$ is  $\set{t_0,\dots, t_5, t_9}$. 
It is a cover-preserving  $\SN 7$ sublattice 
since $t_3\prec t_6$ and $t_3\prec t_4$. 
Thus $t_1\in \anchor2D$, as required. 

\emph{Case 4: $w\in D - (I \uu F)$.}
Notice that $w\in \interior(S)$. 
By Lemma~\ref{sTcklmMa}, $w$ belongs 
to the interior of the unique $m$-stacked $\SN 7$ region $R_m$
with centre $u$, where $m=\rank_{D}(u)$.
Assume first that $m=0$. Then $w=u$, whence it does not belong to 
$\anchor2{D'}$ since it has two upper covers in~$D'$. 
Secondly, assume that $m>0$. 
Then, clearly again, $u\notin \anchor2{D'}$. 
Moreover, of the other elements in the interior of $S$, 
that is, in \[\interior({S})=\interior({R_m})=\setm{u^{(i)}}{1\leq i\leq m},\]   
the element $u^*= u^{(1)}$ is the only one in $\anchor2{D'}$. \qedhere
\end{proof}

\begin{proof}[Proof of Theorem~\ref{thmmain}] 
Let $D$ be a diagram.
If it is obtained from a slim distributive diagram 
by a sequence of restrictions, 
then $D$ is a slim semimodular diagram by Lemma~\ref{insreslmM}. 
Conversely, assume that $D$ is a slim semimodular diagram. 
By virtue of Lemma~\ref{insreslmM}, 
it suffices to show that we can obtain 
a slim distributive diagram from $D$ by a finite sequence of insertions. 
That is, we want a finite sequence $D=D_0, D_1,\ldots$ of diagrams 
such that $D_{i+1}$ is obtained from $D_i$ by an insertion, 
and the last member of the sequence is distributive. 
If~$D_i$ is distributive, then it is the last member of the sequence, 
and we are ready. If it is non-distributive, 
then $\anchor2 {D_i}$ is non-empty by Lemma~\ref{lmAdisnonhet}. 
Pick an element $u_i\in \anchor2{D_i}$ such that $\rank_{D_i}(u_i)$ 
is the smallest member of $\setm{\rank_{D_i}(x)}{x\in \anchor2 {D_i}}$, 
and perform an insertion at $u_i$ to obtain $D_{i+1}$. 
This procedure terminates in finitely many steps by Lemma~\ref{mainlemma}.
\end{proof}

\begin{proof}[Proof of Corollary~\ref{rectcorRo}] 
If we perform an insertion to obtain $D'$ from $D$, 
then the weak corners of $D'$ are the same as those of $D$, $0_{D'}=0_D$, 
and $1_{D'}=1_D$. 
Hence our statement follows from G.~Gr\"atzer and E.~Knapp~\cite[Lemma 6]{GK09} and the argument used in the proof of  Theorem~\ref{thmmain}.
\sajat{\cite[Lemma 6]{GK09}:Let $L$ be a planar semimodular lattice with exactly one left corner and
exactly one right corner. Then $L$ is a rectangular lattice iff 1 is join-reducible and 0 is meet-reducible.}
\end{proof}

There are some efficient ways to check whether a planar diagram 
is a slim semimodular lattice diagram; 
in addition to Lemma~\ref{lMawhadIssU}, see \cite[Theorems 11 and 12]{CSa}. 
The following test follows trivially from the proof of Theorem~\ref{thmmain}. 

\begin{lemma}\label{lemHtoCZs} 
Let $D$ be a planar diagram. 
Construct the sequence 
\[
   D=D_0, D_1,\ldots
\] 
as in the proof of Theorem~\ref{thmmain}. 
Then $D$ is a slim semimodular lattice if{f} 
the sequence terminates with a planar distributive lattice. 
\end{lemma}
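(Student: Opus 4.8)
The plan is to reduce Lemma~\ref{lemHtoCZs} almost entirely to the proof of Theorem~\ref{thmmain}, noting only that the construction of the sequence $D=D_0,D_1,\dots$ makes sense for an \emph{arbitrary} planar diagram, not just for slim semimodular ones. So first I would argue the easy direction: if $D$ is a slim semimodular lattice diagram, then by the proof of Theorem~\ref{thmmain} the sequence is finite and its last member is a slim distributive lattice diagram, which is in particular a planar distributive lattice; this direction is literally what was proved there, once one observes that each $D_i$ is again slim semimodular by Lemma~\ref{insreslmM} and hence the rank-based termination argument via Lemma~\ref{mainlemma} applies verbatim.

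For the converse I would proceed contrapositively through the construction. The key point is that an insertion at $u\in\anchor2{D_i}$ is defined purely diagrammatically (replace a $\SC2$-scheme by the corresponding $\SC3$-scheme), and Lemma~\ref{insreslmMb} guarantees that this operation, \emph{whenever it is performed}, turns a slim semimodular diagram into a slim semimodular diagram whose resection gives back $D_i$ up to similarity. Conversely, a resection turns a slim semimodular diagram into a slim semimodular diagram by Lemma~\ref{insreslmMa}. Hence if the sequence terminates with a planar distributive lattice $D_n$, I would run the resections backwards: $D_n$ is distributive and planar, in particular slim (a planar distributive lattice diagram is slim, having no three-element antichain of join-irreducibles), hence slim semimodular; and each $D_i$ is obtained from $D_{i+1}$ by a resection, so by downward induction on $i$ using Lemma~\ref{insreslmMa}, every $D_i$ — in particular $D=D_0$ — is a slim semimodular lattice diagram.

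The one genuinely delicate point, and the step I expect to be the main obstacle, is making precise that the construction ``as in the proof of Theorem~\ref{thmmain}'' is well defined when we do \emph{not} assume $D$ slim semimodular: at each stage we must be able to recognise whether $D_i$ is a planar distributive lattice (if so, stop), and otherwise to locate $\anchor2{D_i}$ and the ranks so that an insertion at a minimal-rank anchor can be performed. For an arbitrary planar diagram there is no a~priori reason such an anchor exists or that the insertion is again a diagram; the resolution is that the lemma's hypothesis is exactly ``the sequence terminates with a planar distributive lattice'', so we only need the construction to be meaningful along sequences that do reach a distributive terminus, and then the backward resection argument of the previous paragraph shows retroactively that every $D_i$ encountered was in fact slim semimodular, legitimising every step taken. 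I would phrase the proof so that this circularity is only apparent: define the (possibly finite, possibly non-terminating, possibly stuck) sequence formally, observe that ``terminates with a planar distributive lattice'' forces, via Lemma~\ref{insreslmMa} applied backwards from the distributive end, that each term is slim semimodular, whence $D$ is too; and for the forward implication invoke the proof of Theorem~\ref{thmmain} directly.
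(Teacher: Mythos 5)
Your proposal is correct and matches the paper's (entirely implicit) argument: the paper offers no proof beyond the remark that the lemma ``follows trivially from the proof of Theorem~\ref{thmmain}'', and your two directions---quoting that proof for the forward implication, and running Lemma~\ref{insreslmM}(i) by downward induction from the distributive terminus for the converse, with the well-definedness caveat you flag---are exactly the intended expansion. Your parenthetical claim that a planar distributive lattice diagram is slim is the one fact needed to start the backward induction, and it is true (planarity of a lattice forces order dimension at most $2$, hence $\Ji D$ has width at most $2$), though it merits this one-line justification rather than bare assertion.
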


Remark~\ref{R:notwork} points out that the clause
``as in the proof of Theorem~\ref{thmmain}'' in Lemma~\ref{lemHtoCZs}
cannot be dropped.

\end{document}